\theoremstyle{plain}
\algnewcommand{\LineComment}[1]{\State \(\triangleright\) #1}
\def\BState{\State\hskip-\ALG@thistlm}
\tikzset{diamond state/.style={draw,diamond}}
\newtheorem{definition}{\textbf{Definition}}
\newtheorem{theorem}{\textbf{Theorem}}
\newtheorem{remark}{\textbf{Remark}}
\newtheorem{problem}{\textbf{Problem}}
\begin{document}

\articletype{ARTICLE TEMPLATE}

\title{Incomplete Analytic Hierarchy Process with Minimum \textcolor{black}{Weighted} Ordinal Violations}

\author{
\name{L.~Faramondi\textsuperscript{a}\thanks{CONTACT L. Faramondi. Email: l.faramondi@unicampus.it}, G. Oliva\textsuperscript{a}, and S\'andor~Boz\'oki\textsuperscript{b,c}}
\affil{\textsuperscript{a}Unit of Automatic Control, Department of Engineering, Universit\`a Campus Bio-Medico di Roma, via \`Alvaro del Portillo 21, 00128, Rome, Italy;\\
 \textsuperscript{b} Laboratory on Engineering and Management Intelligence, Research Group of Operations Research and Decision Systems, Institute for Computer Science and Control (SZTAKI), Budapest 1518, P.O. Box 63, Hungary;\\ \textsuperscript{c} Department of Operations Research and Actuarial Sciences, Corvinus University of Budapest, Hungary.}
}

\maketitle

\begin{abstract}
\color{black}
Incomplete pairwise comparison matrices offer a natural way of expressing preferences in decision making processes.
Although ordinal information is crucial, there is a bias in the literature: cardinal models dominate.  Ordinal models usually yield 
non-unique solutions; therefore, an approach blending ordinal and cardinal information is needed. 
In this work, we consider two cascading problems: first, we compute ordinal preferences, maximizing an index that combines ordinal and cardinal information; then, we obtain a cardinal ranking by enforcing ordinal constraints.
Notably, we provide a sufficient condition (that is likely to be satisfied in practical cases) for the first problem to admit a unique solution and we develop a provably polynomial-time algorithm to compute it.
The effectiveness of the proposed method is analyzed and compared with respect to other approaches and criteria at the state of the art.
\color{black}
\end{abstract}
\begin{keywords}
pairwise comparison matrix, \textcolor{black}{incomplete data}, logarithmic least squares, ordinal constraints, decision making process
\end{keywords}

\section{Introduction}
\label{sec:intro}
Pairwise comparisons are applied in several areas among which decision theory and decision support, preference modeling, multi-criteria decision making, voting, ranking, scoring and estimating subjective probabilities of future events.
We focus on multiplicative or reciprocal (\textcolor{black}{$\mathcal{A}_{ij}=1/\mathcal{A}_{ji}$}) pairwise comparison matrices,
where the elements are chosen from a ratio scale, usually composed by the values \mbox{$1/9, 1/8, \ldots, 1/2, 1, 2, \ldots, 8,9$}.
The use of such matrices has {\color{black}become} popular due to the {\em Analytic Hierarchy Process} (AHP) \citep{saaty1977scaling}, see
\citep{GoldenWasilHarker1989,Ho2008,SaatyVargas2012,SubramanianRamanathan2012,VaidyaKumar2006}
for a wide variety of applications. Another important and relevant class of decision problems involves
 incomplete pairwise comparison matrices (e.g., see \textcolor{black}{\citep{Harker1987,FedrizziGiove2007,bozoki2010optimal}}), {\color{black} which allow the absence of ratios among some couples of alternatives. 
 
 In both cases, {\color{black}obtaining a weight vector $\bf w$ from the (\textcolor{black}{incomplete}) pairwise comparison matrix \textcolor{black}{$\mathcal{A}$}} is a fundamental task in the decision making process.
In {\color{black}the literature, approaches able to obtaining a vector of absolute weights from rations matrices are divided in two fundamental classes. } 
The first class includes a set of approaches based on the eigenvalues and the associated eigenvectors of the pairwise comparison matrices. Starting from the preliminary results of Wei \citep{wei1952algebraic}, Saaty \citep{saaty1988analytic} and Cogger \citep{cogger1985eigenweight} propose their approaches based on the principle eigenvector of the pairwise comparison matrix. The main issue of this class of approaches is related to the inconsistency in the filling process of the matrices. An accurate analysis about the data sensitivity problem in AHP is presented in \citep{huang2002enhancement}.
The second class of approaches for the identification of absolute weights involves the methods based on optimization problems. Such problems aim at minimizing a distance function between the entries of the pairwise comparison matrix and the absolute weights.
One of the most common approach in literature is the {\em Direct Least Squares} (DLS) proposed in \citep{chu1979comparison,BarzilaiGolany1990}. The author aims to find a vector of weights in order to minimize the Euclidean distance form the pairwise comparison matrix. The same author proposes a modified version of this approach, the  {\em Weighted Least Squares} (WLS). WLS is a non-linear optimization problem based on the minimization of the $L^2$ distance.
 }
The  {\em Logarithmic Least Squares} (LLS) problem \citep{CrawfordWilliams1985,deGraan1980,deJong1984,bozoki2019logarithmic},
originally defined for complete matrices, is extended to the incomplete case in a natural way: taking only the known elements into consideration \citep{Kwiesielewicz1996,TakedaYu1995}.
The  {\em Incomplete Logarithmic Least Squares} problem has been applied for weighting criteria \citep{BenitezCarpitellaCertaIzquierdo2018} and ranking (tennis players \citep{BozokiCsatoTemesi2016}, chess teams \citep{Csato2013} and Go players \citep{ChaoKouLiPeng2018}).
{\color{black}Other relevant approaches are: \textcolor{black}{the Geometric Mean Method~\citep{Kulakowski}, where the weights are assessed using geometric means and taking into account the lack of some comparison,}  the {\em Fuzzy Programming Method} (FPM) \citep{mikhailov2000fuzzy,buyukozkan2004fuzzy} which transforms the problem to find the vector of weights into a fuzzy programming problem, that can easily be solved as a standard linear program, the {\em Robust Estimation Method} (REM)\citep{lipovetsky2002robust} able to provide solution vectors not prone to influence of possible errors among the elements of a pairwise comparison matrix, the {\em Singular Value Decomposition} \citep{gass2004singular} approach which considers a matrix of shares starting from the pairwise comparison matrix and solves an associated eigenproblem, the {\em Correlation Coefficient Maximization Approach} (CCMA)\citep{wang2007priority} based on two optimization problems, one of which leads to an analytic solution, and the {\em Linear Programming Models} (LPM)\citep{chandran2005linear} based on a linear programming formulation, and finally, Srdjevic \citep{srdjevic2005combining} suggests to combine different prioritization methods for deriving the weights vector.} Finally, it is worth mentioning that there are methods in the literature that aim at reconstructing the missing entries of the pairwise comparison matrix; for instance in \citep{bozoki2010optimal} the missing entries that minimize the dominant eigenvalue are chosen, and then the classical dominant eigenvector criterion is adopted to compute the ranking.

%
%

\subsection{Contribution of the Paper}

{\color{black} The largest part of the previously described approaches for the identification of a weights vector is focused on the minimization of a distance between the ratios, given by the pairwise comparison matrix, and the set of absolute weights. This kind of methods does not guarantee the {\color{black} fulfillment} of constraints about the relative preferences. In more details, these approaches provide a solution able to approximate the ratios but, at the same time, considering any two alternatives, there is no guarantee to respect the ordinal preferences that are encoded by the pairwise comparison matrix entries. In other terms, such approaches implicitly discard the relevance of the ordinal information with the goal to identify a solution which approximates the relative ratios.  In some situation such an assumptions are not acceptable. To this end, }
the models and solutions proposed in our paper consider ordinal information as constraints to the cardinal ordering problem.  In more details, the proposed approach consists of an extension of the LLS problem with a procedure composed by two \textcolor{black}{complementary} steps (optimization problems). 
The first problem \textcolor{black}{aims at maximizing the satisfaction of ordinal constraints, weighting more the satisfaction of constraints corresponding to large cardinal preference values. The second problem aims at finding cardinal preferences with additional constraints that reflect the result of the first step.}

\textcolor{black}{The outline of the paper is as follows.
Notations and preliminaries are given in Section~\ref{sec:pre}. In Section~\ref{sec:llsmov} we propose our approach to solve the \textcolor{black}{incomplete} AHP problem by preserving ordinal constraints. Moreover, we introduce the \textcolor{black}{Weighted} Ordinal Satisfaction Index, this measure captures the inconsistencies due to ordinal violations in the solutions of the sparse AHP problem. 
The proposed method are presented on numerical examples in Section 4 with an accurate comparison with alternative methods in literature. \textcolor{black}{ Finally, Section 5 collects some conclusive remarks and future work directions.}}

\section{Notation and Preliminaries}
\label{sec:pre}
\subsection{General Notation}
We denote vectors via boldface letters, while matrices are shown with uppercase letters. We use $A_{ij}$ to address the $(i,j)$-th entry of a matrix $A$ and $x_i$ for the $i$-th entry of a vector ${\bf x}$.
Moreover, we write ${\bm 1}_n$ and ${\bm 0}_n$ to denote a vector with $n$ components, all equal to one and zero, respectively; similarly, we use $1_{n\times m}$ and $0_{n\times m}$ to denote ${n\times m}$ matrices all equal to one and zero, respectively. We denote by $I_n$ the $n\times n$ identity matrix.
We express by $\exp({\bm x})$ and $\ln({\bm x})$ the component-wise exponentiation or logarithm of the vector ${\bm x}$, i.e., a vector such that $\exp({\bm x})_i=e^{x_i}$ and $\ln({\bm x})_i=\ln(x_i)$, respectively. 
{\color{black}Finally, we adopt the notation $\mbox{sign}(A)$ to denote the entry-wise sign of a matrix $A$, i.e., a matrix $\mbox{sign}(A)$ having $(i,j)$-th entry that corresponds to $(\mbox{sign}(A))_{i j} = \mbox{sign}(A_{i j})$, where $\mbox{sign}(A_{i j})=1$ if $A_{i j}>0$, $\mbox{sign}(A_{i j})=0$ if $A_{i j}=0$ and $\mbox{sign}(A_{i j})=-1$, otherwise. }
\subsection{Graph Theory}
Let $G=\{V,E\}$ be a {\em graph} with $n$ nodes \mbox{$V=\{ v_1, \ldots, v_n \}$} and $e$ edges \mbox{$E\subseteq V\times V\setminus\{(v_i,v_i)\,|\, v_i\in V\}$}, where $(v_i,v_j)\in E$ captures the existence of a link from node $v_i$ to node $v_j$.
A graph is said to be {\it undirected} if $(v_i,v_j)\in {E}$ whenever $(v_j,v_i)\in {E}$, and is said to be {\it directed} otherwise.
\color{black}
In the following, when dealing with undirected graphs, we  represent edges using unordered pairs $\{v_i,v_j\}$ in place of the two directed edges $(v_i,v_j),(v_j,v_i)$.
\color{black}
A graph is {\em connected} if for each pair of nodes $v_i,v_j$ there is a path over $G$ that connects them.
Let the neighborhood $\mathcal{N}_i$ of a node $v_i$ in an undirected graph $G$ be the set of nodes $v_j$ that are connected to $v_i$ via an edge $\{v_i,v_j\}$ in $E$. 
The {\em degree} $d_i$ of a node $v_i$ in an undirected graph $G$ is the number of its incoming edges, i.e., $d_i = |\mathcal{N}_i|$.
\color{black}The {\em degree matrix} $D$ of an undirected graph $G$ is the $n\times n$ diagonal matrix such that $D_{ii}=d_i$.
The {\em adjacency matrix} $\texttt{Adj}$ of a directed or undirected graph $G=\{V,E\}$ with $n$ nodes is the $n\times n$ matrix such that $\texttt{Adj}_{ij}=1$ if $(v_i,v_j)\in E$ and $\texttt{Adj}_{ij}=0$, otherwise.
A well know  property of adjacency matrices is that $\texttt{Adj}^2_{ij}>0$ if and only if there is at least one path (respecting the edge's orientation if the graph is directed) from node $v_i$ to node $v_j$ via an intermediate node $v_k$ (see, for instance,~\citep{godsil2001g}).
The {\em Laplacian matrix} associated to an undirected graph $G$ is the $n\times n$ matrix $L$, having the following structure.
$$
L_{ij}=\begin{cases}
-1& \mbox{ if } \{v_i,v_j\}\in E,\\
d_i,& \mbox{ if } i=j,\\
0,& \mbox{ otherwise}. 
\end{cases}
$$
\color{black}
It is well known that $L$ has an eigenvalue equal to zero, and that, in the case of undirected graphs, the multiplicity of such an eigenvalue corresponds to the number of connected components of $G$ \citep{godsil2001g}. Therefore, the eigenvalue zero has multiplicity one if and only if the graph is connected.
\color{black}
A cycle over a directed graph $G$ is a cyclic sequence of edges $\{(v_1,v_2),(v_2,v_3),\ldots,(v_m,v_1)\}$.
Two cycles are said to be {\em edge-disjoint} if they have no edge in common.
\textcolor{black}{The {\em density} $\rho$ of an undirected graph $G=\{V,E\}$ is defined as
$$
\rho=\dfrac{2|E|}{n(n-1)},
$$
i.e., the ratio between the cardinality $|E|$ of the edge set and $n(n-1)/2$, that is, the cardinality of the edges in a complete graph with $n$ nodes.}

\color{black}
\subsection{Convex Constrained Optimization}
We now review the first-order Karush-Kuhn-Tucker (KKT) necessary and sufficient optimality conditions \citep{Zangwill}.
Note that, in view of the later developments of the paper,  we only review  the  conditions where linear constraints are involved\footnote{
We point out that, in the general case of arbitrary convex constraints, additional {\em constraint qualifications} might be required (e.g., Slater's Condition); moreover, in the case of  nonconvex objective functions or constraints, the KKT conditions hold just as necessary conditions. The interested reader is referred to \cite{Zangwill} (and references therein) for a comprehensive overview of the topic.}.
Let us consider a constrained minimization problem having the following structure:
\begin{equation}
\label{prob:convexproblem}
\begin{aligned}
& \underset{{\bm x}\in \mathbb{R}^{n}}{\min}
& & f({\bm x}) \\
& \text{subject to}
& & g_i({\bm x})\leq 0,& \forall i\in\{1,\ldots,q\} \\
& & &  h_i({\bm x})=0,& \forall i\in\{q+1,\ldots,s\}.
\end{aligned}
\end{equation}
where $f:\mathbb{R}^n\rightarrow \mathbb{R}$ is a convex function and all $g_i:\mathbb{R}^n\rightarrow \mathbb{R}$ and all $h_i:\mathbb{R}^n\rightarrow \mathbb{R}$ are linear.


We now review the KKT first-order necessary and sufficient optimality conditions.
\begin{theorem}[KKT First-order Necessary Conditions]
\label{theoKKT}
Consider a constrained optimization problem as in Eq.~\eqref{prob:convexproblem} and let  the {\em Lagrangian function} be defined as follows:
\begin{equation*}
\mathcal{L}({\bm x},{\bm \zeta})=f({\bm x})+\displaystyle \sum_{i=1}^q \zeta_i \, g_i({\bm x}) + \sum_{i=q+1}^s \zeta_i \, h_i({\bm x})
\end{equation*}
where ${\bm \zeta}=[\zeta_1,\ldots, \zeta_s]^T$ collects the \textcolor{black}{{\em Lagrangian multipliers}.}
A necessary and sufficient condition for a point ${\bm x}^*\in \mathbb{R}^n$ to be a global minimum is that there is ${\bm \zeta}^*\in \mathbb{R}^s$ such that
\begin{enumerate}
\item $\nabla_{{\bm x}} \mathcal{L}({\bm x},{\bm \zeta}) |_{\bm x={\bm x}^*,\, \bm \zeta= {\bm \zeta}^*}=0$; 
\item $\zeta^*_i g_i({\bm x}^*)=0, \quad \forall i=1,\ldots,q$;
\item $g_i({\bm x}^*)\leq 0,\quad \forall i=1,\ldots, q$.
\item $h_i({\bm x}^*)= 0,\quad \forall i=q+1,\ldots, s$.
\item $\zeta^*_{i}\geq 0,\quad \forall i=1,\ldots, q$.
\end{enumerate}
\end{theorem}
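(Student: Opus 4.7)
The statement is the standard convex-programming KKT theorem, so my plan is to split the argument cleanly into sufficiency (the easy direction given convexity) and necessity (which requires a separation/Farkas-type step, made easier by the fact that all constraints are affine).

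For sufficiency, I would fix the pair $({\bm x}^*,{\bm \zeta}^*)$ satisfying conditions (1)--(5) and observe that the map ${\bm x}\mapsto \mathcal{L}({\bm x},{\bm \zeta}^*)$ is convex: $f$ is convex by hypothesis, each $\zeta_i^* g_i$ is linear times a nonnegative scalar (convex), and each $\zeta_i^* h_i$ is linear (affine, hence convex). By (1), ${\bm x}^*$ is a stationary point of this convex function, and therefore a global minimizer of $\mathcal{L}(\cdot,{\bm \zeta}^*)$ over $\mathbb{R}^n$. Chaining the resulting inequality with complementary slackness (2) and primal feasibility (3)--(4), I get, for every feasible ${\bm x}$,
\begin{equation*}
f({\bm x}^*) = \mathcal{L}({\bm x}^*,{\bm \zeta}^*) \leq \mathcal{L}({\bm x},{\bm \zeta}^*) = f({\bm x}) + \sum_{i=1}^q \zeta_i^* g_i({\bm x}) + \sum_{i=q+1}^s \zeta_i^* h_i({\bm x}) \leq f({\bm x}),
\end{equation*}
where the last inequality uses $\zeta_i^*\geq 0$ together with $g_i({\bm x})\leq 0$ and $h_i({\bm x})=0$. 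This proves that ${\bm x}^*$ is a global minimum.

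For necessity, let ${\bm x}^*$ be a global minimum and let $I({\bm x}^*)=\{i\in\{1,\ldots,q\}\mid g_i({\bm x}^*)=0\}$ be the active inequality index set. Writing $g_i({\bm x})={\bm a}_i^T{\bm x}-b_i$ and $h_i({\bm x})={\bm c}_i^T{\bm x}-d_i$, I consider the linearized feasible cone $C=\{{\bm d}\in\mathbb{R}^n\mid {\bm a}_i^T{\bm d}\leq 0,\ i\in I({\bm x}^*);\ {\bm c}_i^T{\bm d}=0,\ i=q+1,\ldots,s\}$. Because the constraints are affine, every ${\bm d}\in C$ is a feasible direction at ${\bm x}^*$ (for sufficiently small step sizes), so convexity of $f$ yields $\nabla f({\bm x}^*)^T{\bm d}\geq 0$ for all ${\bm d}\in C$. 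Applying Farkas' lemma (in its inequality form with equality constraints) to this separation statement produces nonnegative coefficients $\zeta_i^*\geq 0$ for $i\in I({\bm x}^*)$ and real coefficients $\zeta_i^*$ for $i=q+1,\ldots,s$ such that
\begin{equation*}
\nabla f({\bm x}^*) + \sum_{i\in I({\bm x}^*)} \zeta_i^* {\bm a}_i + \sum_{i=q+1}^s \zeta_i^* {\bm c}_i = {\bm 0}.
\end{equation*}
Setting $\zeta_i^*=0$ for the inactive inequality indices recovers condition (1) and simultaneously enforces complementary slackness (2), while (3)--(5) are immediate from primal feasibility of ${\bm x}^*$ and the construction of the multipliers.

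The main obstacle is the necessity direction, specifically justifying that no constraint qualification beyond affineness is needed in order to invoke Farkas' lemma; I would therefore include a short remark that linear (affine) constraints are automatically regular (Abadie's/affine constraint qualification holds), which is precisely why the footnote in the statement flags Slater's condition as necessary only in the general nonlinear convex case. The sufficiency direction is essentially a one-line consequence of convexity once the Lagrangian is written out, and does not require any qualification.
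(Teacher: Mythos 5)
The paper does not prove this theorem at all: it is presented as a reviewed textbook result with a citation to Zangwill, and the accompanying footnote only records that with linear constraints no additional constraint qualification is needed. Your argument is therefore a self-contained proof where the paper offers none, and it is correct. The sufficiency half is the standard Lagrangian sandwich $f({\bm x}^*)=\mathcal{L}({\bm x}^*,{\bm \zeta}^*)\leq\mathcal{L}({\bm x},{\bm \zeta}^*)\leq f({\bm x})$, correctly using convexity of $\mathcal{L}(\cdot,{\bm \zeta}^*)$ (which requires $\zeta_i^*\geq 0$ for the inequality terms), complementary slackness and the sign conditions. The necessity half correctly exploits affineness: for affine constraints the linearized cone $C$ consists entirely of genuine feasible directions, so first-order optimality forces $\nabla f({\bm x}^*)^T{\bm d}\geq 0$ on $C$, and the affine Farkas lemma then produces multipliers of the right sign with complementary slackness obtained by zeroing the inactive ones; your closing remark that affine constraints automatically satisfy a constraint qualification (Abadie) is exactly the point the paper's footnote gestures at. Two minor polish items, neither of which affects correctness: the inequality $\nabla f({\bm x}^*)^T{\bm d}\geq 0$ follows from optimality along the feasible ray together with differentiability of $f$, not from convexity (convexity plays no role in that step), and you should state explicitly that $f$ is assumed differentiable, since condition (1) of the theorem presupposes the gradient exists.
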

\color{black}
\subsection{Incomplete Analytic Hierarchy Process} 
\label{sec:AHP}
In this subsection we review the {\em Analytic Hierarchy Process} (AHP) problem when the available information is incomplete. Specifically, we review the problem and discuss the Logarithmic Least Squares approach for solving it.

Let us consider a set of $n$ alternatives, and suppose that each alternative is characterized by an unknown utility or value $w_i>0$. 
Within the AHP problem, the aim is to compute an estimate of the unknown utilities, based on information on relative preferences. 
In the incomplete information case, we are given a value $\textcolor{black}{\mathcal{A}}_{ij}=\epsilon_{ij}w_i/w_j$ for selected pairs of alternatives $i,j$; such a piece of information corresponds to an estimate of the ratio $w_i/w_j$, where $\epsilon_{ij}>0$ is a multiplicative perturbation that represents the estimation error.
Moreover, for all available $\textcolor{black}{\mathcal{A}}_{ij}$, we assume that $\textcolor{black}{\mathcal{A}}_{ji}=\textcolor{black}{\mathcal{A}}_{ij}^{-1}=\epsilon_{ij}^{-1}w_j/w_i$, i.e., the available terms $\textcolor{black}{\mathcal{A}}_{ij}$ and $\textcolor{black}{\mathcal{A}}_{ji}$ are always consistent and satisfy $\textcolor{black}{\mathcal{A}}_{ij}\textcolor{black}{\mathcal{A}}_{ji}=1$.

We point out that, while traditional AHP approaches \citep{saaty1977scaling,crawford1987geometric,barzilai1987consistent} require knowledge on every pair of alternative, in the partial information setting we are able to estimate the vector ${\bf w}=[w_1,\ldots,w_n]^T$ of the utilities, knowing just a subset of the perturbed ratios.
Specifically, let us consider a graph \mbox{$G=\{V,E\}$} with $|V|=n$ nodes; in this view, each alternative $i$ is associated to a node $v_i\in V$, while the knowledge of $w_{ij}$ corresponds to an edge \mbox{$(v_i,v_j)\in E$}. Clearly, since we assume to know $w_{ji}$ whenever we know $w_{ij}$, the graph $G$ is undirected. 
\textcolor{black}{Let $\mathcal{A}$ be the $n\times n$ matrix collecting the terms $\mathcal{A}_{ij}$, with $\mathcal{A}_{ij}=0$ if   $(v_i,v_j)\not \in E$.}

Notice that, in the AHP literature, there is no universal consent on how to estimate the utilities in the presence of perturbations (see for instance the debate in \citep{dyer1990remarks,saaty1990exposition} for the original AHP problem). This is true also in the incomplete information case, see, for instance, \citep{bozoki2010optimal,oliva2017sparse,ECC2018}.
While the debate is still open, we point out that the logarithmic leasts squares approach appears particularly appealing, since it focuses on error minimization.

For these reasons, in Section~\ref{subsec:log} we review the  {\em \textcolor{black}{Incomplete} Logarithmic Least Squares} (\textcolor{black}{ILLS}) Method \citep{bozoki2010optimal,ECC2018},  which represents an extension of the classical {\em Logarithmic Least Squares}  (LLS) Method developed in \citep{crawford1987geometric,barzilai1987consistent} for solving the AHP problem in the complete information case. {\color{black} Moreover, for the sake of completeness, we summarize the main aspects of 
the \textcolor{black}{Incomplete} Direct Least Squares (Section~\ref{sec:sdls}), the \textcolor{black}{Incomplete} Weighted Least Squares (Section~\ref{sec:swls}), and the \textcolor{black}{Incomplete} Eigenvector Approach (Section~\ref{sec:ev}). These methods are compared with our proposed approach in Section~\ref{sec:compar}.

\subsection{\textcolor{black}{Incomplete} Logarithmic Least Squares (\textcolor{black}{ILLS}) Approach to AHP}
\label{subsec:log}
Within the ILLS algorithm, the aim is to find a logarithmic least-squares approximation $ {\bf w}^*$ to the unknown utility vector ${\bf w}$, i.e., to find the vector that solves
\begin{equation}
\label{eq:LLSM}
{\bf w}^*=\underset{{\bm x}\in\mathbb{R}_+^n}{\arg \min}\left\{\dfrac{1}{2}\sum_{i=1}^n\sum_{j\in \mathcal{N}_i}\left(\ln(\textcolor{black}{\mathcal{A}}_{ij})-\ln\left(\frac{x_i}{x_j}\right)\right)^2\right\}.
\end{equation}
An effective strategy to solve the above problem is to operate the substitution ${\bm y}=\ln({\bm x})$, where $\ln(\cdot)$ is the component-wise logarithm, so that Eq.~\eqref{eq:LLSM} can be rearranged as
\begin{equation}
\label{eq:LLSM2}
\begin{aligned}
 {\bf w}^*&=\exp\left(\underset{{\bm y}\in\mathbb{R}^n}{\arg \min}\left\{\dfrac{1}{2}\sum_{i=1}^n\sum_{j\in \mathcal{N}_i}\left(\ln(\textcolor{black}{\mathcal{A}}_{ij})-y_i + y_j\right)^2\right\}\right),
\end{aligned}
\end{equation}
where $\exp(\cdot)$ is the component-wise exponential.
Let us define
$$
\kappa({\bm y})=\dfrac{1}{2}\sum_{i=1}^n\sum_{j\in \mathcal{N}_i}\left(\ln(\textcolor{black}{\mathcal{A}}_{ij})-y_i + y_j\right)^2;
$$
because of the substitution ${\bm y}=\ln({\bm x})$, the problem becomes convex and unconstrained, and its global minimum is in the form ${\bf w}^*=\exp({\bm y}^*)$, where ${\bm y}^*$ satisfies
$$
\frac{ \partial \kappa({\bm y})}{\partial y_i}\Big|_{{\bm y}={\bm y}^*}=\sum_{j\in \mathcal{N}_i}(\ln(\textcolor{black}{\mathcal{A}}_{ij})-y^*_i+y^*_j)=0,\quad \forall i=1,\ldots,n.
$$
Let us consider the $n\times n$ matrix $P$ such that $P_{ij}=\ln(\textcolor{black}{\mathcal{A}}_{ij})$ if ${\textcolor{black}{\mathcal{A}}}_{ij}>0$ and $P_{ij}=0$, otherwise;
we can express the above conditions in a compact form as
\begin{equation}
\label{eq:implicitlaplacian}
L{\bm y}^*=P{\bm 1}_n,
\end{equation}
where $L$ is the Laplacian matrix associated to the graph $G$.
Notice that, since for hypothesis $G$ is undirected and connected, the Laplacian matrix $L$ has rank $n-1$ \citep{godsil2001g}. 
Therefore, a possible way to calculate a vector ${\bm y}^*$ that satisfies the above equation is to fix one arbitrary component of ${\bm y}^*$ and then solve a reduced size system by simply inverting the resulting nonsingular $(n-1)\times (n-1)$ matrix \citep{bozoki2019logarithmic}.

Vector ${\bm y}^{\ast}$ can also be written as the arithmetic mean of vectors calculated from the spanning trees of the graph of comparisons, corresponding to the incomplete additive pairwise comparison matrix $\ln \textcolor{black}{\mathcal{A}}$ \citep{bozoki2019logarithmic}. 
\color{black}
Finally, it is worth mentioning that, when the graph $G$ is connected, the differential equation
$$
\dot {\bm y}(t)= -L{\bm y}(t)+P{\bm 1}_n
$$
asymptotically converges to $y^{\ast}$ (see \citep{olfati2007consensus}), and represents yet another way to compute it. Notably, the latter approach is typically used by the control system community for {\em formation control} of mobile robots, since the computations are easily implemented in a distributed way and can be performed cooperatively by different mobile robots. Therefore, such an approach appears particularly appealing in a distributed computing setting. 

{\color{black}
\subsection{\textcolor{black}{Incomplete} Direct Least Squares (\textcolor{black}{IDLS})}
\label{sec:sdls}
In this section we illustrate an alternative approach as solution for AHP. Starting from the theory of the DLS method \citep{chu1979comparison,Barzilai1997,BarzilaiGolany1990}, we now summarize the approach applicable in \textcolor{black}{an incomplete information} scenario.
The objective of this method is the minimization of the Euclidean distance between the solution and the distribution of the relative weights in the \textcolor{black}{incomplete} pairwise comparison matrix. That is:
\begin{problem}
\label{prob:SDLS}
Find the vector $\bf w$ that solves
\begin{equation}
\begin{aligned}
&{\min\,}\sum_{i=1}^n \sum_{j = 1}^{n} \mbox{sign}(\textcolor{black}{\mathcal{A}}_{ij}) \bigg( \textcolor{black}{\mathcal{A}}_{ij} - \frac{w_i}{w_j} \bigg)^2&\\
& \text{subject to} &  \\
& \sum_{i=1}^n w_i = 1
\end{aligned}\end{equation}
\end{problem}

Note that solving the \textcolor{black}{Incomplete} Direct Least Squares (\textcolor{black}{IDLS}) is a rather difficult task, since the 
objective function is nonlinear and usually nonconvex; moreover the problem might not admit a unique solution.
Finally, approximation schemes such as the Newton's method may require a good initial point to be successfully applied (see  \citep{bozoki2008solution} and references therein for a more detailed discussion on this issue).

}

{\color{black}
\subsection{\textcolor{black}{Incomplete} Weighted Least Squares (\textcolor{black}{IWLS})}
\label{sec:swls}
Starting from the classic formulation of the WLS \citep{blankmeyer1987approaches}, in this section we summarize the main characteristics of the \textcolor{black}{Incomplete} Weighted Least Squares (\textcolor{black}{IWLS}) which is applicable in \textcolor{black}{an incomplete information} setting. More precisely, a solution for the AHP problem is given by the solution to the following problem:
\begin{problem}
\label{prob:SWLS}
Find the vector $\bf w$ that solves
\begin{equation}
\label{prob:SWLSeq}
\begin{aligned}
&{\min\,}\sum_{i=1}^n \sum_{j = 1}^{n} \mbox{sign}(\textcolor{black}{\mathcal{A}}_{ij}) \bigg(\textcolor{black}{\mathcal{A}}_{ij}  w_{j} - {w_i} \bigg)^2&\\
& \text{subject to} &  \\
& \sum_{i=1}^n w_i = 1
\end{aligned}\end{equation}
\end{problem}
}
{\color{black}
\subsection{Eigenvector Approach (EV)}
\label{sec:ev}
\color{black}This approach \citep{Harker1987} is a generalization of the original eigenvector approach from Saaty.
For notational convenience, we review the approach following the equivalent formalism in \citep{oliva2017sparse} where the matrix involved in the computation is based on the available comparisons, rather than on the missing ones as in the original formulation by \citep{Harker1987}.
\color{black}

Specifically, in the Eigenvector Approach (EV), assuming the underlying graph is connected, the ranking is approximated by the dominant eigenvector of the \textcolor{black}{incomplete} matrix
$$
D^{-1}(\textcolor{black}{\mathcal{A}}-I_n)
$$  
where $D$ is the degree matrix, i.e., a diagonal matrix such that $D_{ii}$ is equal to the degree $d_i$ of node $i$ over $G$ (i.e., the amount of available comparisons involving node $i$).}

\subsection{Evaluation Criteria}
\label{sec:criteria}
{\color{black}
As introduced in Section~\ref{sec:intro}, the main methods for the identification of the weights vector from the the pairwise comparison matrices, disagree on the definition of the result, because each method is focused on a different aspect of the problem (although there is recent work in the literature aimed at allowing for tunable performance indices \citep{brunelli2019general}). To this end, with the aim to compare the effectiveness of multiple results from multiple approaches, we summarize the main aspect of the following comparison criteria (the interested reader is referred to \citep{brunelli2018survey} for a comprehensive survey on this topic).

\subsubsection{Minimum Violations (MV)}
\label{sec:MV}
 Minimum Violations (MV) \citep[p.~213]{GolanyKress1993} also known as the Number of Judgment Reversals (NJR) in \citep[p.~217]{AbelMikhailovKeane2018} was introduced to check whether relations $\textcolor{black}{\mathcal{A}}_{ij}>1$ and $x_{i} > x_{j}$ are fulfilled together.
Specifically, each pair of alternatives $i,j$ such that $i$ is preferred to $j$  but \textcolor{black}{$\textcolor{black}{\mathcal{A}}_{ij}<1$} contributes with a score equal to one to the MV indicator, while each pair of equally important alternatives $i,j$ such that $\textcolor{black}{\mathcal{A}}_{ij}\neq 1$ (or vice versa) contributes with a score $1/2$ (\textcolor{black}{but it is added twice, once for $i,j$ and once
for $j,i$, so it gives the same penalty in total as
the other one, where $1$ is added once}); in other words, considering a set of $n$ alternatives, the MV index is defined as
 \begin{equation}
 	\label{eq:mv}
	MV = \sum_{i=1}^n \sum_{j=1}^n V_{ij},
 \end{equation} 
 where 
 $V_{ ij }=\begin{cases} 1 \quad if \quad w_i > w_j \quad and \quad \textcolor{black}{\mathcal{A}}_{ij} < 1,\\
  \frac{1}{2} \quad if \quad w_i = w_j \quad and \quad \textcolor{black}{\mathcal{A}}_{ij} \neq 1, \\
   \frac{1}{2} \quad if \quad w_i \neq w_j \quad and \quad \textcolor{black}{\mathcal{A}}_{ij}  = 1,\\
0 \quad otherwise. 
\end{cases}$\\
}
{

\color{black}
In this view, the larger is MV, the larger the number of ordinal violations in the vector of utilites ${\bm w}$. Note that, the approach presented in this paper aims at minimizing this kind of metrics in order to respect the preferences expressed in the pairwise comparison matrices. With the aim to apply such criterion also in the \textcolor{black}{incomplete} context we propose the following modification of Equation~\ref{eq:mv}:
\begin{equation}
 	\label{eq:mvs}
	MVs = \sum_{i=1}^n\sum_{j=1}^n  \mbox{sign}(\textcolor{black}{\mathcal{A}}_{ij})V_{ij}
 \end{equation} 

In this way we avoid to consider ordinal violations due to the absence of preferences in the pairwise comparison matrix.

\subsubsection{Total Deviation (TD)}
\label{sec:TD}
A large number of approaches for the definition of the utility vector ${\bm w}$ is formulated in terms of an optimization problem characterized by the minimization of some distance {\color{black} measure} between the ratios $\frac{w_i}{w_j}$ and the corresponding entries of the pairwise comparison matrix $\textcolor{black}{\mathcal{A}}_{ij}$.
Considering $n$ alternatives, the error between the two measures is defined by Takeda et al. \citep{takeda1987estimating} and is computed as

\begin{equation}
\label{eq:td}
	TD = \sum_{i=1}^{n} \sum_{j=1}^{n} \bigg( \textcolor{black}{\mathcal{A}}_{ij} - \frac{w_i}{w_j} \bigg)^2
\end{equation}
This criterion measures the Euclidean distance between the ratios obtained from the entries of the weights vector and the initial relative measures.
With the aim to apply this criterion also in the \textcolor{black}{incomplete} case, we take into account the distances only if $\textcolor{black}{\mathcal{A}}_{ij}\neq 0$:
\begin{equation}
\label{eq:tds}
	TDs = \sum_{i=1}^{n} \sum_{j=1}^{n} \mbox{sign}(\textcolor{black}{\mathcal{A}}_{ij})\bigg( {\textcolor{black}{\mathcal{A}}}_{ij} - \frac{w_i}{w_j} \bigg)^2
\end{equation}
}

\color{black}
\section{\textcolor{black}{ILLS Problem with Minimum Weighted Ordinal Violations}}
\label{sec:llsmov}
In this section, we develop a novel framework, namely \textcolor{black}{{\em Incomplete Logarithmic Least Squares with Minimum Weighted Ordinal Violations (ILLS-MWOV)}} applicable in both complete and \textcolor{black}{incomplete} settings.
Specifically, let us consider a situation where we are given a possibly incomplete matrix ${\textcolor{black}{\mathcal{A}}}$ for $n$ alternatives, corresponding to a connected undirected graph $G$ with $n$ nodes. 
The proposed framework consists of two \textcolor{black}{complementary} steps: first of all, we find an ordinal ranking \textcolor{black}{taking into account also cardinal information}; then, we find a cardinal ranking that does not violate the ordinal one defined during the first step.

\subsection{\textcolor{black}{Weighted Ordinal Ranking}}
In view of the developments in this  paper, it is convenient to provide the following definitions.
\textcolor{black}{
\begin{definition}[Pairwise ordinal preference]
\label{def:pairwise}
A pairwise ordinal preference for a pair of alternatives $i,j$ is expressed by the pair $x_{ij},x_{ji}\in\{0,1\}$, where
$$
x_{ij}=
\begin{cases}
1, & \mbox{ if } i \mbox{ is preferred to }j;\\
0,& \mbox{ if no choice on the preference of } i \mbox{ over } j \mbox{ is specified}
\end{cases}
$$
and it holds 
\begin{equation}
\label{eq:noinconsist}
x_{ij}+x_{ji}\leq 1.
\end{equation}
\end{definition}
Notice that the condition in Eq.~\eqref{eq:noinconsist} guarantees to avoid inconsistent situations where the $i$-th alternative is preferred to the $j$-th one and the \mbox{$j$-th} one is preferred to the $i$-th one.
Moreover, we point out that Eq.~\eqref{eq:noinconsist}  allows situations where 
$$x_{ij}=0\quad \mbox{ and }\quad x_{ji}=0,$$
i.e., \textcolor{black}{where no preference is specified for the pair $i,j$}.
Notice that, due to the definition of $x_{ij}$ and to Eq.~\eqref{eq:noinconsist}, the variables $x_{ij}$ and $x_{ji}$ can be combined to provide overall information on the preference expressed for the pair $i,j$; in fact, it holds
$$
x_{ij}-x_{ji}=
\begin{cases}
1 & \mbox{ if } i \mbox{ preferred to }j\\
-1 & \mbox{ if } j \mbox{ preferred to }i\\
0& \mbox{ if no preference is specified for the pair } i,j.
\end{cases}
$$
}

Let us now develop a \textcolor{black}{weighted} indicator of ordinal violation that will be the basis for the proposed optimization problem.
Notice that the proposed metric generates a penalty with magnitude equal to \textcolor{black}{$|\ln(\textcolor{black}{\mathcal{A}}_{ij})|$} whenever the variables $x_{ij}, x_{ji}$ are in contrast with the ordinal information encoded in the ratio $\textcolor{black}{\mathcal{A}}_{ij}$; moreover, it considers a reward with magnitude equal to \textcolor{black}{$|\ln(\textcolor{black}{\mathcal{A}}_{ij})|$} whenever the variables $x_{ij}, x_{ji}$ agree with the ordinal information encoded in the ratio $\textcolor{black}{\mathcal{A}}_{ij}$. This penalty/reward scheme fundamentally differs from the MVs approach, \textcolor{black}{in that pairs corresponding to largest ratios correspond to largest rewards/penalties, while in the case of MV the rewards and penalties are independent on the numerical value of the ratios. As a consequence, the proposed optimization formulation will prioritize the satisfaction of ordinal information corresponding to large relative preference values. Moreover, as it will be made clear later in this section, this choice allows for the existence of a unique solution even in the presence of cycles, under mild hypotheses on the available data.}
\color{black}
\begin{definition}[Weighted Ordinal Satisfaction Index]
Suppose that a pairwise ordinal preference, expressed in terms of the pair  $x_{ij},x_{ji}\in\{0,1\}$, is defined for all pairs $i,j$ of alternatives and denote by $\{x_{ij}\}$ the set  collecting all such variables $x_{ij}$.
The weighted ordinal satisfaction index $\sigma$ is defined as
$$\sigma=\sum_{\textcolor{black}{\{v_i,v_j\}\in E}}\ln(\textcolor{black}{\mathcal{A}}_{ij})(x_{ij}-x_{ji}).$$
\end{definition}
Notice that the above index takes into account only those pairs of alternatives for which a pairwise comparison $\textcolor{black}{\mathcal{A}}_{ij}\neq 0$ is available.
Notice further that for each pair $\{v_i,v_j\}$ of alternatives such that $\textcolor{black}{\mathcal{A}}_{ij}\neq 0$ we consider a contribution $\ln(\textcolor{black}{\mathcal{A}}_{ij})(x_{ij}-x_{ji})$, i.e., a reward equal to $|\ln(\textcolor{black}{\mathcal{A}}_{ij})|$ when the ordinal preferences $x_{ij},x_{ji}$ match with the ordinal information encoded in the ratio $\textcolor{black}{\mathcal{A}}_{ij}$ (e.g., $\textcolor{black}{\mathcal{A}}_{ij}$ is above one and $x_{ij}=1$) and a penalty equal to  $-|\ln(\textcolor{black}{\mathcal{A}}_{ij})|$ when $x_{ij},x_{ji}$ are in disagreement with $\textcolor{black}{\mathcal{A}}_{ij}$ (e.g., $\textcolor{black}{\mathcal{A}}_{ij}>1$ and $x_{ji}=1$). Notably, we consider zero reward/penalty when both $x_{ij}=0$ and $x_{ji}=0$.

\color{black}
Notice that, when the weighted ordinal satisfaction index $\sigma$ is used as a guide to choose the variables $x_{ij}$, we assign zero penalty/reward to {\em ties}, i.e., to those pairs $i,j$ such that $\mathcal{A}_{ij}=1$ (i.e., because $\ln(\mathcal{A}_{ij})=\ln(\mathcal{A}_{ji})=0$). To avoid this issue, we now define the following function.
\begin{definition}
[Tie Index]
Suppose that a pairwise ordinal preference, expressed in terms of the pair  $x_{ij},x_{ji}\in\{0,1\}$, is defined for all pairs $i,j$ of alternatives and denote by $\{x_{ij}\}$ the set  collecting all such variables $x_{ij}$.
The tie index $\tau$ is defined as
$$\tau=-\delta\sum_{\{v_i,v_j\}\in E\,| \mathcal{A}_{ij}=1}(x_{ij}+x_{ji})$$
with $\delta>0$.
\end{definition}
The above index assigns a penalty equal to $\delta$ whenever a variable $x_{ij}$ corresponding to a tie is set to one.
\color{black}

\color{black}
Based on the above indices, we now define the following optimization problem.
\begin{problem}
\label{prob:problem1}
Find the set $\{x^*_{ij}\}$ that solves
\begin{equation}
\label{prob:convexproblem0eq}
\begin{aligned}
& \underset{\{x_{ij}\}\,|\, x_{ij}\in\{0,1\}}{\max\,} \sigma \textcolor{black}{+\tau} &\\
& \text{subject to} &  \\
&\begin{cases}
x_{ij}+x_{ji}\leq {\color{black} 1}, &\forall i,j  {\color{black}  \hskip0.4em\relax s.t. \hskip0.4em\relax i\neq j} \\[0.3cm]
x_{ij}\geq x_{ik}x_{kj}, &\forall i,j,k \hskip0.4em\relax  {\color{black}  s.t. \hskip0.4em\relax i\neq j\neq k} \\[0.3cm]     
\end{cases}
\end{aligned}\end{equation}
\end{problem}
The above problem, aims at finding the \textcolor{black}{set of}  pairwise ordinal preferences for all pairs of alternatives (not just for the ones for which $\textcolor{black}{\mathcal{A}}_{ij}\neq 0$) that maximizes the \textcolor{black}{weighted} ordinal satisfaction index $\sigma$ and guarantees transitivity of the ordinal preferences.
Notice that the first constraint is required for $x_{ij},x_{ji}$ to represent a pairwise ordinal preference. \textcolor{black}{This constraint directly derives from the relation discussed in Definition~\ref{def:pairwise} and it is necessary to prevent the case $x_{ij}=x_{ji}=1$ from happening. }
Moreover, the constraint $x_{ij}\geq x_{ik}x_{kj}$ models the requirement that the ordinal ranking encoded by the variables $\{x_{ij}\}$ is transitive. In other words, if the $i$-th alternative is preferred to the $k$-th one and the $k$-th one is preferred to the $j$-th one, then alternative $i$ must be preferred to alternative $j$ \textcolor{black}{(we reiterate that $x_{ij}=0$ does not imply $j$ is preferred to $i$ but represents the situation where the preference of $i$ over  $j$ is not explicitly decided)}.
\color{black}
Notably, for sufficiently small $\delta$ (e.g., for $0<\delta< \min_{i,j\,|\mathcal{A}_{ij}>1}\{\ln(\mathcal{A}_{ij})\}/|E|$) the contribution of $\tau$ to the objective function $\sigma+\tau$ is negligible but the presence of $\tau$ prevents unnecessary ties to be set to one.
\color{black}
Before discussing the second problem, let us rearrange Problem~\ref{prob:problem1}  as an \textcolor{black}{Integer Linear Programming} (ILP) formulation; this is done by transforming each nonlinear constraint \textcolor{black}{in the form} $x_{ij}\geq x_{ik}x_{kj}$ into a set of linear constraints featuring additional Boolean variables $z_{ijk}$ and by suitably expressing $\sigma$ as a linear combination of the variables $x_{ij}$, \textcolor{black}{as shown in Problem~\ref{prob:problem11}. }
\begin{problem}
\label{prob:problem11}
Find the sets $\{x_{ij}\}$ and $\{z_{ijk}\}$ that solve
\begin{equation}
\label{prob:convexproblem0}
\begin{aligned}
& \underset{\{x_{ij}\}\,|\, x_{ij}\in \{0,1\},\, \{z_{ijk}\}\,|\, z_{ijk}\in \{0,1\}}{\max\,}\,\sum_{\textcolor{black}{\{v_i,v_j\}\in E}}\textcolor{black}{\ln(\textcolor{black}{\mathcal{A}}_{ij})(x_{ij}-x_{ji})}\,\textcolor{black}{-\delta\sum_{\{v_i,v_j\}\in E\,| \mathcal{A}_{ij}=1}(x_{ij}+x_{ji})}&\\
& \text{subject to} &  \\
&\begin{cases}
x_{ij}+x_{ji}\leq {\color{black}1}, &\forall i,j  {\color{black}  \hskip0.4em\relax s.t. \hskip0.4em\relax i\neq j} \\[0.3cm]
{\color{black} x_{ij} \geq z_{ijk}}  &\forall i,j,k  {\color{black}  \hskip0.4em\relax s.t. \hskip0.4em\relax i\neq j\neq k} \\[0.3cm]
z_{ijk}\geq x_{ik}+x_{kj}-1,  &\forall i,j,k  {\color{black}  \hskip0.4em\relax s.t. \hskip0.4em\relax i\neq j\neq k} \\[0.3cm]
z_{ijk}\leq x_{ik},  &\forall i,j,k  {\color{black}  \hskip0.4em\relax s.t. \hskip0.4em\relax i\neq j\neq k} \\[0.3cm]
z_{ijk}\leq x_{kj},  &\forall i,j,k  {\color{black}  \hskip0.4em\relax s.t. \hskip0.4em\relax i\neq j\neq k} \\[0.3cm]
\end{cases}
\end{aligned}\end{equation}
\end{problem}

Notice that Problem~\ref{prob:problem1} features $O(n^2)$ variables, while the ILP formulation in Problem~\ref{prob:problem11} requires $O(n^3)$ variables. However, the amount of constraints in the ILP formulation remains $O(n^3)$ in both formulations.
\color{black}
\subsubsection{Uniqueness of solution}
Generally speaking, ordinal problems may have multiple solutions, especially in the presence of cycles\footnote{\color{black}For instance, we have a cycle when $i$ is preferred to $j$, $j$ is preferred to $k$ and $k$ is preferred to $i$. In this case the information available is remarkably inconsistent.\color{black}}. However, we point out that, within Problem~\ref{prob:problem1}, a blend of ordinal and cardinal information is used (i.e., the index $\sigma$ is weighted with the ratios $\textcolor{black}{\mathcal{A}}_{ij}$); under suitable assumptions, this feature allows to guarantee uniqueness of solution, even in the presence of cycles.
\color{black}

In order to characterize a sufficient condition for the existence of a unique solution to Problem~\ref{prob:problem1}, it is convenient to introduce the directed graph $G_d=\{V,E_d\}$, where $V=\{v_1,\ldots,v_n\}$ is the set of alternatives and $E_d$ is the set of directed edges $(v_i,v_j)$ (i.e., from alternative $i$ to alternative $j$) that correspond to the available ratios ${\textcolor{black}{\mathcal{A}}}_{ij}\geq1$. In other words, for each pair of alternatives $i,j$ for which a ratio is available, we select the edge $(v_i,v_j)$ if $\textcolor{black}{\mathcal{A}}_{ij}\geq 1$ and we select the edge $(v_j,v_i)$ if $\textcolor{black}{\mathcal{A}}_{ji}\geq 1$ (if both $\textcolor{black}{\mathcal{A}}_{ij}=1$ and $\textcolor{black}{\mathcal{A}}_{ji}=1$, we add either one of the edges $(v_i,v_j),(v_j,v_i)$ for the pair $i,j$). 
Let us now consider the cycles over $G_d$ and let us give the following definition.
\begin{definition}
Let us consider a cycle $C$ over $G_d$ and let $\textcolor{black}{\mathcal{A}}_{\min}$ be the minimum ratio associated to a link in the cycle\footnote{\color{black}We reiterate that we consider only links in $G_d$, i.e., links with associated weights $\mathcal{A}_{ij}\geq 1$; hence, it holds \color{black}$\mathcal{A}_{\min}\geq 1$.\color{black}}.
\color{black}
The cycle is said to be {\em ambiguous} if the multiplicity \color{black}of the edges in $C$ associated to a ratio equal to ${\textcolor{black}{\mathcal{A}}}_{\min}$ is more than one.
\end{definition} 

In order to show the relation between ambiguous cycles and multiple optimal solutions, let us consider the example reported in Fig.~\eqref{fig:ambiguous} (there are two links with associated weight  $\textcolor{black}{\mathcal{A}}_{\min}=2$, represented by a red dotted line). In this case, an optimal solutions will feature $x_{23}=1$, $x_{45}=1$ and $x_{51}=1$. However, to fulfill the transitivity requirements, there is a need to either set $x_{21}=1$ or $x_{43}=1$, paying a penalty equal to $-\ln(2)$; in both cases, the solution obtained has the same value of $\sigma= \ln(7)+\ln(5)+\ln(3)+\ln(2)-\ln(2)$ and thus this instance has two  optimal solutions.

Let us now provide a sufficient condition for the uniqueness of the optimal solution to Problem~\ref{prob:problem1}.
\begin{theorem}
\label{theo:unique}
Let ${\textcolor{black}{\mathcal{A}}}$ be given and let $G_d=\{V,E_d\}$ be the directed graph obtained by considering only the directed edges corresponding to ratios $\textcolor{black}{\mathcal{A}}_{ij}\geq1$. If $G_d$ has no ambiguous cycle and the cycles are all edge-disjoint then the solution of Problem~\ref{prob:problem1} is unique.
\end{theorem}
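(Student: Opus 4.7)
The plan is to exhibit the unique optimizer $\{x^*_{ij}\}$ explicitly, verify feasibility, and then show that every other feasible configuration strictly lowers $\sigma+\tau$. For the construction, I would start from $G_d$ and reverse the unique minimum-weight edge $e_C^*$ of every cycle $C$; its existence follows from non-ambiguity, and its selection is unambiguous because the cycles are edge-disjoint, so no edge belongs to two cycles. Each cycle is broken by its reversal, and by edge-disjointness no new cycle is introduced, so the resulting directed graph is acyclic. Its transitive closure is a strict partial order, which defines $\{x^*_{ij}\}$ and trivially satisfies the antisymmetry and transitivity constraints of Problem~\ref{prob:problem1}.

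Optimality follows from an edge-by-edge decomposition of $\sigma+\tau$. Writing $w_{ij}=\ln\mathcal{A}_{ij}\geq 0$ whenever the $G_d$-orientation is $i\to j$, each edge of $E$ contributes $+w_{ij}$ if the partial order follows $G_d$, $-w_{ij}$ if reversed, and $0$ (with an additional $-\delta$ penalty on ties via $\tau$) if neither $x_{ij}$ nor $x_{ji}$ equals $1$. For every edge of $G_d$ not lying on any cycle, no transitivity obstruction prevents the $G_d$-orientation, so that choice maximizes the per-edge contribution uniquely (with neutralization being the unique maximum for ties). The crucial step is the per-cycle bound: the total contribution of any cycle $C$ is at most $\sum_{e\in C} w_e - 2w_{e_C^*}$, attained \emph{uniquely} when precisely $e_C^*$ is reversed and every other edge of $C$ follows $G_d$. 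Since the cycles are edge-disjoint, the per-cycle bounds add independently, pinpointing $\{x^*_{ij}\}$ as the unique global maximizer.

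The main obstacle is the cycle lemma, in particular eliminating cheap-looking \emph{neutralizations}. I would first observe that a single-edge neutralization inside $C$ is infeasible: if every other cycle edge follows $G_d$, transitivity along the remaining $G_d$-path forces $x_{ji}=1$ at the neutralized edge, contradicting $x_{ij}=x_{ji}=0$. Thus any feasible deviation involving neutralizations must touch at least two cycle edges, paying a cost of at least $w_{k_1}+w_{k_2}>2w_{e_C^*}$ by uniqueness of the minimum; a single reversal at a non-minimum edge costs $2w_{e'}>2w_{e_C^*}$, and reversing two or more cycle edges is strictly worse still. The only delicate corner is when $w_{e_C^*}=0$, i.e., the unique minimum of $C$ is a tie: reversing $e_C^*$ then pays $-\delta$ via $\tau$, but any alternative still pays at least some $w_j>0$ in $\sigma$, and the paper's choice $\delta<\min_{\mathcal{A}_{ij}>1}\ln\mathcal{A}_{ij}/|E|$ guarantees $\delta<w_j$, so the proposed reversal still strictly dominates, completing the uniqueness argument.
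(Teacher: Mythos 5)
Your proof follows the same route as the paper's: reverse the unique minimum-weight edge of each (non-ambiguous, pairwise edge-disjoint) cycle, orient every other edge of $G_d$ with $\mathcal{A}_{ij}>1$ along $G_d$, neutralize the remaining ties, and take the transitive closure; your per-cycle lemma (infeasibility of a single neutralization by transitivity, cost at least $w_{k_1}+w_{k_2}>2w_{e_C^*}$ for two neutralizations, uniqueness of the minimum for a single reversal, and the $\delta$-versus-$w$ comparison when the minimum is a tie) is in fact considerably more detailed than the paper's one-sentence assertion that the minimum edge must be reversed. One step is not quite right as stated: for a tie $\{v_i,v_j\}$ lying on no cycle, neutralization is not always feasible, because transitivity can force $x_{ij}=1$. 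Take $\mathcal{A}_{12}=\mathcal{A}_{23}=2$ and $\mathcal{A}_{13}=1$ with the tie oriented as $(v_1,v_3)$ in $G_d$: there is no cycle, the uniquely optimal choices $x_{12}=x_{23}=1$ force $x_{13}=1$ via $x_{13}\geq x_{12}x_{23}$, and the tie contributes $-\delta$ rather than $0$. Hence the sum of your per-edge maxima is a strict upper bound on such instances and your candidate does not attain it; the argument is easily repaired by observing that any solution attaining the per-edge and per-cycle maxima on all non-tie edges necessarily incurs the same $-\delta$ on every transitively forced tie, so the corrected bound is still attained uniquely, but this patch is needed. (The paper sidesteps the issue by explicitly adding all transitively forced variables to its candidate, though it too omits the accompanying optimality argument.) Finally, both you and the paper assert rather than prove that reversing one edge per cycle leaves the graph acyclic; this does follow from the edge-disjointness of all cycles of $G_d$, but since it is exactly what guarantees feasibility of the candidate, a line of justification would strengthen the argument.
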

\begin{proof}
In order to prove the statement, let us first focus on a single non-ambiguous cycle $C$ with $m-1$ edges and, without loss of generality, let us denote by $(v_{m},v_1)$ the unique link corresponding to the minimum weight in the cycle.
In this case, it can be noted that the unique optimal solution corresponds to setting $x_{i,i+1}=1$ for all $i=1,\ldots,m-1$ and,  to fulfill the transitivity requirements, there is the need to set $x_{1m}=1$ (paying a penalty equal to \textcolor{black}{$\max\{\mathcal{A}_{\min},\delta\}$}) and $x_{i,i+2}=1$ for all $i=1,\ldots,m-2$.
At this point we observe that, when $G_d$ satisfies the assumptions of this theorem, it is possible to select the variables $x_{ij}$ corresponding to each cycle (including the additional ones introduced for transitivity) in the above way, and then the cycles can be conceptually collapsed into a node, thus resulting in an acyclic graph.
To conclude the proof we observe that, if the graph is acyclic, it is sufficient to select $x_{ij}=1$ for all remaining $(v_i,v_j)\in E_d$ such that $\mathcal{A}_{ij}>1$ \textcolor{black}{and $x_{ij}=x_{ji}=0$ for all remaining ties (i.e., for $(v_i,v_j)\in E_d$ such that $\mathcal{A}_{ij}=1$); moreover,} there is the need to set to one all variables $x_{ij}$ that are required to fulfill the transitivity constraints to obtain the unique optimal solution.
This completes our proof.
\end{proof}
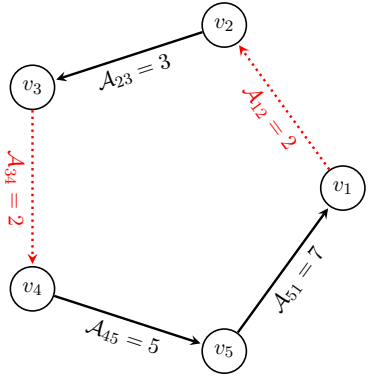
\begin{figure}
\centering
\resizebox{2in}{!} {
\begin{tikzpicture}[
            > = stealth, 
            shorten > = 1pt, 
            auto,
            node distance = 2.5cm, 
            semithick 
        ]

        \tikzstyle{every state}=[
            draw = black,
            thick,
            fill = white,
            minimum size = 4mm
        ]
\color{black}
 	\node[state] (v1) at ({0*72}:3){$v_1$};
        \node[state] (v2) at ({1*72}:3){$v_2$};
        \node[state] (v3)at ({2*72}:3){$v_3$};
        \node[state] (v4) at ({3*72}:3) {$v_4$};
        \node[state] (v5) at ({4*72}:3) {$v_5$};
   
        \path[red,dotted,->,very thick,sloped, below] (v1) edge node {$\mathcal{A}_{12}=2$} (v2);
 \path[black,->,very thick,sloped, below] (v2) edge node {$\mathcal{A}_{23}=3$} (v3);
  \path[red,dotted,->,very thick,sloped, below] (v3) edge node {$\mathcal{A}_{34}=2$} (v4);
   \path[black,->,very thick,sloped, below] (v4) edge node {$\mathcal{A}_{45}=5$} (v5);
    \path[black,->,very thick,sloped, below] (v5) edge node {$\mathcal{A}_{51}=7$} (v1);
    \end{tikzpicture}
    }
    
\caption{\color{black}Example of ambiguous cycle. The problem has two optimal solutions, one featuring $x_{21}=1$ and the other featuring $x_{43}=1$.\color{black}}

 \label{fig:ambiguous}
\end{figure}
\begin{figure}
\centering
\resizebox{2in}{!} {
\color{black}
\begin{tikzpicture}[
            > = stealth, 
            shorten > = 1pt, 
            auto,
            node distance = 2.5cm, 
            semithick 
        ]

        \tikzstyle{every state}=[
            draw = black,
            thick,
            fill = white,
            minimum size = 4mm
        ]

 	\node[state] (v1) at ({1*90}:3){$v_1$};
        \node[state] (v2) at ({0*90}:3){$v_2$};
        \node[state] (v3)at ({2*90}:3){$v_3$};
        \node[state] (v4) at ({3*90}:3){$v_4$};
   
        \path[red,dotted,->,very thick,sloped, below] (v1) edge node {$\mathcal{A}_{12}=2$} (v2);
 \path[->,very thick,sloped, below] (v2) edge node {$\mathcal{A}_{23}=3$} (v3);
  \path[red,dotted,->,very thick,sloped, below] (v3) edge node {$\mathcal{A}_{31}=2$} (v1);
    
     \path[red,dotted,->,very thick,sloped, below] (v3) edge node {$\mathcal{A}_{34}=2$} (v4);
      \path[red,dotted,->,very thick,sloped, below] (v4) edge node {$\mathcal{A}_{42}=2$} (v2);
    \end{tikzpicture}
    }
    \color{black}
\caption{\color{black}Example of instance that does not satisfy the assumptions of Theorem~\ref{theo:unique} (because cycles are ambiguous and not edge-disjoint) but has a unique optimal solution.\color{black}}
 \label{fig:ambiguous2}
\end{figure}
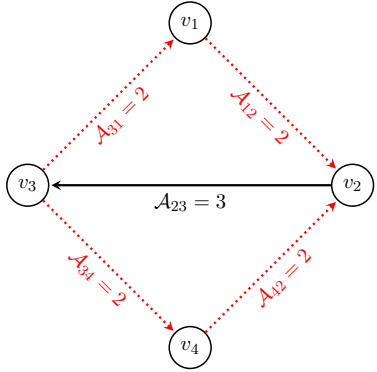

\begin{algorithm}
\color{black}
\caption{\color{black}Solve Problem~\ref{prob:problem1} under the assumptions in Theorem~\ref{theo:unique} \color{black}}\label{algo:alg}
\begin{algorithmic}
\Procedure{$WeightedOrdinalRanking({\textcolor{black}{\mathcal{A}}})$}{}
\State Let $\texttt{Adj}$ be an $n\times n$ adjacency matrix with $\texttt{Adj}_{ij}=0,\,\forall i,j$ 
\State Construct directed graph $G_d=\{V,E_d\}$
\State Find cycles in $G_d$
\State	\For{all cycles $C$ in $G_d$ }{
	\State Find link $(v_i,v_j)$ in $C$ with minimum weight
	\State Set $\texttt{Adj}_{ji}=1$
	}
\State Set $\texttt{Adj}_{ij}=1$ for all remaining $(v_i,v_j)\in E_d$	such that $\mathcal{A}_{ij}>1$
\State	\For{$i=1,\ldots,n-1$}{
	\State $\texttt{Adj}= \mbox{sign}(\texttt{Adj}+\texttt{Adj}^2)$
	}
	\State Set $X^*=\texttt{Adj}$
	\State \Return $X^*$
\EndProcedure
\end{algorithmic}
\color{black}
\end{algorithm}

A few remarks are now in order.
\begin{remark}
The proof of Theorem~\ref{theo:unique} is constructive, i.e., it provides an actual algorithm to find the unique global optimal solution over a  graph $G_d=\{V,E_d\}$ with edge-disjoint nonambiguous cycles. The pseudocode of such an algorithm is reported in Algorithm~\ref{algo:alg}.
Specifically, Algorithm~\ref{algo:alg}  consists of first finding all cycles and, for each cycle, selecting the link $(v_i,v_j)$ of smallest weight and then setting $x_{ji}=1$, paying a penalty  \textcolor{black}{equal to $\max\{\mathcal{A}_{\min},\delta\}$}.
After this operation, the graph conceptually becomes acyclic, and the algorithm sets to one the variables $x_{ij}$ for all remaining edges in $E_d$ \textcolor{black}{such that $\mathcal{A}_{ij}>1$.}
Finally, additional variables $x_{ij}=1$ are added by transitivity.
This is done by exploiting the properties of adjacency matrices. In particular, collecting the entries $x_{ij}$ into an adjacency matrix $\texttt{Adj}$, we have that $\texttt{Adj}^2_{ij}>0$ if and only if there is a path from node $v_i$ to node $v_j$ that passes through a third node $v_k$~\citep{godsil2001g}. Therefore, taking $sign(\texttt{Adj}+\texttt{Adj}^2)$ we introduce all variables $x_{ij}=1$ required to satisfy transitivity for the current variables $x_{ij}=1$. The procedure is repeated $n-1$ times to guarantee that also the newly added terms $x_{ij}=1$ satisfy transitivity.
Notice that, in general, the number of cycles $c$ in a directed graph can be exponential; however, if the cycles are edge-disjoint, it can be easily observed that there are at most $|E_d|/2$, i.e., $O(|E_d|)$ cycles.
At this point we observe that, using Johnson's algorithm, computing all cycles has a computational cost $O((|V|+|E_d|)(c+1))$ \citep{Johnson1975} and for each cycle we need to scan all edges, i.e., $O(|V|)$ operations in the worst case.
As for the addition of variables for transitivity, we observe that the matrix product has a computational complexity $O(|V|^{2.373})$ \citep{Davie} and that we compute such products $O(|V|)$ times; hence, the overall procedure has a computational complexity that is 
$$
\max\left\{O((|V|+|E_d|)(|E_d|+1)|V|),O(|V|^{3.373})\right\}\leq O(|V|^4)
$$
where the upper bound is obtained noting that $|E_d|\leq  n(n-1)/2$.
Therefore, under the assumptions of Theorem~\ref{theo:unique}, Algorithm~\ref{algo:alg} has polynomial time complexity.
\end{remark}
\begin{remark}
We reiterate that, given the fact $\sigma$ uses cardinal information to weight the ordinal preferences,  \textcolor{black}{and due to the presence of $\tau$}, Problem~\ref{prob:problem1} has a unique solution also in the presence of edge-disjoint nonambiguous cycles \textcolor{black}{and ties}. It can be noted that, by using an objective function based on purely ordinal information (e.g., the MV index), one can not guarantee unicity even under the assumptions of  Theorem~\ref{theo:unique}.
\end{remark}
\begin{remark}
The condition given in the above theorem is just a sufficient condition, hence the set of instances that correspond to a unique global optimal solution is larger. For instance, the example in Fig.~\eqref{fig:ambiguous2} consists of two ambiguous cycles sharing a link; yet, the global optimal solution is unique and corresponds to setting $x_{32}=1$ (paying a penalty $-\ln(3)$) and all other $x_{ij}$ corresponding to edges in $G_d$ to one. 
\end{remark}
\color{black}
\subsection{\textcolor{black}{ILLS} Ranking with Prescribed Pairwise Ordinal Preferences}
\textcolor{black}{Let us assume $\mathcal{A}$ satisfies the assumptions in Theorem~\ref{theo:unique} and let $\{x^*_{ij}\}$ be the optimal solution to the first subproblem.}
Within the second subproblem, our aim is to find a utility vector \mbox{${\bf w}^*=\exp({\bm y}^*)$}, where ${\bm y}^*$ solves the following problem.
\begin{problem}
Let $0 < \epsilon \ll 1$ be given. Find ${\bm y}^*\in\mathbb{R}^n$ that solves 
\label{prob:problem2}
\begin{equation}
\label{prob:convexproblem0}
\begin{aligned}
& \underset{{\bm y}\in\mathbb{R}^n}{\min\,} \sum_{i=1}^n \sum_{j\in \mathcal{N}_i}\left(\ln(\textcolor{black}{\mathcal{A}}_{ij})-y_i+y_j\right)^2&\\
& \text{subject to} &  \\
&\begin{cases}
y_i\geq y_j {\color{black} + \epsilon} , &\forall i,j,\,i\neq j \mbox{ s.t. } x^*_{ij}=1.
\end{cases}
\end{aligned}\end{equation}
\end{problem}
The above {\color{black}quadratic optimization} problem is essentially the classical logarithmic least-squares problem discussed in Section~\ref{subsec:log}, with the additional constraint able to guarantee that $w_i {\color{black}>} w_j$ whenever $x^*_{ij}=1$; the strict inequality in the constraint is implemented by introducing a small positive $\epsilon $.

Let us conclude the section by providing a necessary and sufficient global optimality condition for Problem~\ref{prob:problem2}.

{\color{black}

\begin{theorem}
Let us consider the AHP problem with incomplete information and let us assume that the graph $G$ corresponding to the ratio matrix ${\textcolor{black}{\mathcal{A}}}$ is connected and let $\{x^*_{ij}\}$ be the optimal solution to Problem~\ref{prob:problem1}. 
The global optimal solution ${\bm y}^*$ of Problem~\ref{prob:problem2} satisfies
\begin{equation}
\label{eq:zero1}
L(A){\bm y}^*=\dfrac{1}{2}(\Lambda^{*}-\Lambda^{*T}){\bm 1}_n+{\bm r}
\end{equation}
where $L(A)$ is the Laplacian matrix corresponding to the graph $G$ and $\Lambda^*$ is the $n\times n$ matrix of  Lagrange multipliers, such that for each pair of alternatives $i,j$ \textcolor{black}with $x^*_{ij}=1$ it holds
\begin{equation}
\label{eq:lambdaij}
\Lambda^*_{ij}=\max\left\{0,2\sum_{k\in \mathcal{N}_i,\,k\neq j}(y_i^*-y_k^*)-\sum_{k\neq j\,|\,x^*_{ik}=1}\Lambda^*_{ik} +\sum_{k\,|\,x^*_{ki}=1}\Lambda^*_{ki}-2r_i+2\epsilon\right\},
\end{equation}
while $\Lambda^*_{ij}=0$, otherwise,
with $r_i=\sum_{j\in \mathcal{N}_i}\ln(\textcolor{black}{\mathcal{A}}_{ij})$ 
and ${\bm r}=[r_1,\ldots,r_n]^T$.

\end{theorem}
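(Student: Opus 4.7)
The plan is to invoke the KKT necessary and sufficient conditions of Theorem~\ref{theoKKT}, since Problem~\ref{prob:problem2} is a convex quadratic program with purely linear inequality constraints; hence KKT is necessary and sufficient for global optimality. I would introduce one nonnegative multiplier $\Lambda_{ij}$ per constraint $y_j+\epsilon-y_i\leq 0$ (ranging over pairs with $x^*_{ij}=1$), extending $\Lambda$ to the full $n\times n$ matrix by setting $\Lambda_{ij}=0$ whenever $x^*_{ij}=0$. The Lagrangian is then
$$
\mathcal{L}({\bm y},\Lambda)=\sum_{i=1}^n\sum_{j\in\mathcal{N}_i}\bigl(\ln\mathcal{A}_{ij}-y_i+y_j\bigr)^2+\sum_{i,j\,|\,x^*_{ij}=1}\Lambda_{ij}(y_j+\epsilon-y_i).
$$

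Next I would compute the stationarity condition $\partial\mathcal{L}/\partial y_k=0$ for each $k$. Each undirected edge $\{v_k,v_j\}\in E$ contributes to the quadratic objective through both ordered pairs $(k,j)$ and $(j,k)$; using $\ln\mathcal{A}_{jk}=-\ln\mathcal{A}_{kj}$ and regrouping, the objective's $y_k$-derivative becomes a positive constant multiple of $(L{\bm y})_k-r_k$, where $L$ is the Laplacian of $G$ and $r_k=\sum_{j\in\mathcal{N}_k}\ln\mathcal{A}_{kj}$. The constraint terms contribute $\sum_{i\,|\,x^*_{ik}=1}\Lambda_{ik}-\sum_{j\,|\,x^*_{kj}=1}\Lambda_{kj}$, which is exactly the $k$-th entry of $-(\Lambda-\Lambda^T){\bm 1}_n$. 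Writing stationarity in vector form and absorbing the common positive scalar into a rescaling of the multipliers yields Eq.~\eqref{eq:zero1}.

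To obtain Eq.~\eqref{eq:lambdaij}, I would isolate $\Lambda^*_{ij}$ in the $i$-th stationarity equation, producing an expression of the form
$$
\Lambda^*_{ij}=2\bigl[(L{\bm y}^*)_i-r_i\bigr]-\sum_{k\neq j\,|\,x^*_{ik}=1}\Lambda^*_{ik}+\sum_{k\,|\,x^*_{ki}=1}\Lambda^*_{ki}.
$$
Then I would expand $(L{\bm y}^*)_i=\sum_{k\in\mathcal{N}_i}(y^*_i-y^*_k)$ and, whenever the inequality $y_j+\epsilon\leq y_i$ is active (which is the regime in which $\Lambda^*_{ij}>0$ by complementary slackness), split off the $k=j$ summand and substitute $y^*_i-y^*_j=\epsilon$; this replaces the sum over $\mathcal{N}_i$ by a sum over $\mathcal{N}_i\setminus\{j\}$ and produces the isolated $+2\epsilon$ term. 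When the constraint is inactive, complementary slackness forces $\Lambda^*_{ij}=0$; the outer $\max\{0,\cdot\}$ unifies both regimes while simultaneously enforcing dual feasibility $\Lambda^*_{ij}\geq 0$.

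The main obstacle I anticipate is the bookkeeping of the stationarity derivation: each undirected edge is visited twice in the double sum, the two visits carry reciprocal ratios via $\ln\mathcal{A}_{ji}=-\ln\mathcal{A}_{ij}$, and the ``outgoing'' multipliers $\Lambda_{kj}$ must be combined with the ``incoming'' multipliers $\Lambda_{ik}$ to assemble the antisymmetric vector $(\Lambda^*-\Lambda^{*T}){\bm 1}_n$ correctly. Once that algebra is carried through, the remainder of the argument is a direct transcription of the KKT primal/dual feasibility and complementary slackness conditions of Theorem~\ref{theoKKT}.
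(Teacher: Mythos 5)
Your proposal follows essentially the same route as the paper: form the Lagrangian, invoke the KKT conditions (necessary and sufficient here since the objective is convex and the constraints are linear), read off the Laplacian equation from stationarity, and obtain the $\max\{0,\cdot\}$ formula for $\Lambda^*_{ij}$ by combining complementary slackness on the active constraint $y_i^*-y_j^*=\epsilon$ with dual feasibility. The only cosmetic differences are that you explicitly absorb the positive scalar from the double-counted edge sum into the multipliers, whereas the paper leaves the constant implicit, and the paper additionally notes that feasibility of Problem~\ref{prob:problem2} is guaranteed by the first-stage solution $\{x^*_{ij}\}$.
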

\begin{proof}
Notice that, by construction, the problem is convex and has linear inequality constraints.
Moreover, since $\{x^*_{ij}\}$ is the optimal solution to Problem~\ref{prob:problem1}, by construction it is possible to assign values $y_i$ that satisfy the constraints in Problem~\ref{prob:problem2}; we conclude therefore, that the set of admissible solutions to Problem~\ref{prob:problem2} is nonempty.
In order to find the global optimal solution, we can therefore resort to the KKT first order criterion, which represents a necessary and sufficient global optimality condition\footnote{When the objective function is convex and the constraints are linear, in order to guarantee that the KKT first order criterion is a necessary and sufficient global optimality conditon, there is no need to check for constraint qualification conditions such as the Slater's condition, (see, for instance, \citep{Zangwill}); it is sufficient to show that the set of admissible solutions is nonempty.}.
The Lagrangian function associated to the problem at hand is:

$$
\mathcal{L}({\bm y},\Lambda)= \sum_{i=1}^n \sum_{k\in \mathcal{N}_i}\left(\ln(\textcolor{black}{\mathcal{A}}_{ik})-y_i+y_k\right)^2+
\sum_{i=1}^n \sum_{k\,|\,x_{ik}^*=1} \Lambda_{ik}(y_k-y_i+\epsilon)
$$
Following standard KKT theory, a necessary and sufficient optimality condition for ${\bm y}^*$ to be the global optimum is that there is $\Lambda^*$ such that
\begin{equation}
\label{eq:zero}
\begin{cases}\frac{\partial \mathcal{L}({\bm y},\Lambda)}{\partial y_i}\Big|_{{\bm y}={\bm y}^*,\Lambda=\Lambda^*}=
0,& \forall i\in\{1,\ldots,n\},\\[2em]
\Lambda^*_{ij}(y^*_j-y^*_i+\epsilon)=0,& \forall i,j  \mbox{ s.t. } x^*_{ij},\\[2em]
\Lambda^*_{ij}\geq0,& \forall i,j  \mbox{ s.t. } x^*_{ij}.
\end{cases}
\end{equation}
Note that, for all $i\in\{1,\ldots,n\}$ the fist of the above condition corresponds to 
\begin{equation}
\label{eq:first}
2\sum_{k\in \mathcal{N}_i}(y_i^*-y_k^*)=\sum_{k\,|\,x^*_{ik}=1}\Lambda^*_{ik} -\sum_{k\,|\,x^*_{ki}=1}\Lambda^*_{ki}+2\sum_{k\in \mathcal{N}_i}\ln(\textcolor{black}{\mathcal{A}}_{ik}).
\end{equation}
Let us now consider the second condition in Eq.~\eqref{eq:zero}; for all pairs of alternatives $i,j$ such that $x^*_{ij}=1$ it either holds $\Lambda^*_{ij}=0$ or
\begin{equation}
\label{eq:second}
y_j-y_i+\epsilon=0.
\end{equation}
In the latter case, since by combining Eq.~\eqref{eq:first} it holds
$$
2(y_j-y_i)=2\sum_{k\in \mathcal{N}_i,\,k\neq j}(y_i^*-y_k^*)-\sum_{k\,|\,x^*_{ik}=1}\Lambda^*_{ik} +\sum_{k\,|\,x^*_{ki}=1}\Lambda^*_{ki}-2\sum_{k\in \mathcal{N}_i}\ln(\textcolor{black}{\mathcal{A}}_{ik}),
$$
Eq.~\eqref{eq:second} is equivalent to requiring that
$$
2\sum_{k\in \mathcal{N}_i,\,k\neq j}(y_i^*-y_k^*)-\sum_{k\,|\,x^*_{ik}=1}\Lambda^*_{ik} +\sum_{k\,|\,x^*_{ki}=1}\Lambda^*_{ki}-2\sum_{k\in \mathcal{N}_i}\ln(\textcolor{black}{\mathcal{A}}_{ik})+2\epsilon=0,
$$
 i.e.,
 $$
\Lambda^*_{ij}=2\sum_{k\in \mathcal{N}_i,\,k\neq j}(y_i^*-y_k^*)-\sum_{k\neq j\,|\,x^*_{ik}=1}\Lambda^*_{ik} +\sum_{k\,|\,x^*_{ki}=1}\Lambda^*_{ki}-2\sum_{k\in \mathcal{N}_i}\ln(\textcolor{black}{\mathcal{A}}_{ik})+2\epsilon.
$$
We conclude that, since by the third condition in Eq.~\eqref{eq:zero} it must hold $\Lambda^*_{ij}\geq 0$, the Lagrange multiplier $\Lambda_{ij}^*$ satisfies Eq.~\eqref{eq:lambdaij} for all $i,j$ such that $x^*_{ij}=1$.
The proof is complete.
\end{proof}
}

\section{Experimental Results}

\subsection{Illustrative Examples}
In order to demonstrate the \textcolor{black}{ILLS-MWOV} methodology, \textcolor{black}{we now consider two illustrative examples.
Let us first} consider the example in \citep[Example 3.4]{csato2016incomplete}, for which the \textcolor{black}{ILLS} approach is known to yield an ordering ${\bf w}^{\textcolor{black}{ILLS}}$ that contradicts the ordinally consistent preferences $\{x_{ij}^{\textcolor{black}{ILLS}}\}$, in that ${\textcolor{black}{\mathcal{A}}}_{12}>\color{black}1\color{black}$ but $w^{\textcolor{black}{ILLS}}_1<w^{\textcolor{black}{ILLS}}_2$.
Specifically, \textcolor{black}{with reference to the results in \citep{csato2016incomplete}, the example encompasses $7$ alternatives and the graph underlying the available ratios is given in Figure~\ref{fig:res}.(a), concerning the weight matrix\footnote{ \textcolor{black}{The example in  \citep{csato2016incomplete} is given for generic coefficients $b,1/b$, in this case we set $b=2$.}}  ${\textcolor{black}{\mathcal{A}}}$, it is defined as:}

\begin{figure*}
\centering
\subfigure[\textcolor{black}{Graph representation of the instance considered in \citep{csato2016incomplete}.}]{%
\resizebox {.3\textwidth} {!}{
\begin{tikzpicture}[
            > = stealth, 
            shorten > = 1pt, 
            auto,
            node distance = 1cm, 
            semithick 
        ]

        \tikzstyle{every state}=[
            draw = black,
            thick,
            fill = white,
            minimum size = 4mm,
            minimum width=5mm
        ]

 	\node[state] (v1) at ({0*51.4285714}:5){$v_1$};
        \node[state] (v2) at ({1*51.4285714}:5){$v_2$};
        \node[state] (v3)at ({2*51.4285714}:5){$v_3$};
        \node[state] (v4) at ({3*51.4285714}:5) {$v_4$};
        \node[state] (v5) at ({4*51.4285714}:5) {$v_5$};
        \node[state] (v6) at ({5*51.4285714}:5) {$v_6$};
  	\node[state] (v7) at ({6*51.4285714}:5) {$v_7$};
             
\path[sloped,->, above,very thick] (v1) edge node {\begin{footnotesize}\end{footnotesize}} (v2);
\path[sloped,->, above,very thick] (v2) edge node {\begin{footnotesize}\end{footnotesize}} (v3);
\path[sloped,->, above,very thick] (v3) edge node {\begin{footnotesize}\end{footnotesize}} (v4);
\path[sloped,->, above,very thick] (v4) edge node {\begin{footnotesize}\end{footnotesize}} (v5);
\path[sloped,->, above,very thick] (v5) edge node {\begin{footnotesize}\end{footnotesize}} (v6);
\path[sloped,->, above,very thick] (v5) edge node {\begin{footnotesize}\end{footnotesize}} (v7);
\path[sloped,->, above,very thick] (v1) edge node {\begin{footnotesize}\end{footnotesize}} (v7);
\path[sloped,->, above,very thick] (v2) edge node {\begin{footnotesize}\end{footnotesize}} (v4);
\path[sloped,->, above,very thick] (v3) edge node {\begin{footnotesize}\end{footnotesize}} (v5);
\path[sloped,->, above,very thick] (v4) edge node {\begin{footnotesize}\end{footnotesize}} (v6);
\path[sloped,->, above,very thick] (v5) edge node {\begin{footnotesize}\end{footnotesize}} (v7);
\path[sloped,->, above,very thick] (v1) edge node {\begin{footnotesize}\end{footnotesize}} (v6);
\end{tikzpicture}
}
}\hspace{5pt}
\subfigure[Rankings obtained via the \textcolor{black}{Incomplete} Logarithmic Least-squares (${\bf w}^{\textcolor{black}{ILLS}}$) and the proposed approach for the ordinal ranking ${\bf w}^{*}$.]{%
\resizebox*{9cm}{!}{\includegraphics{./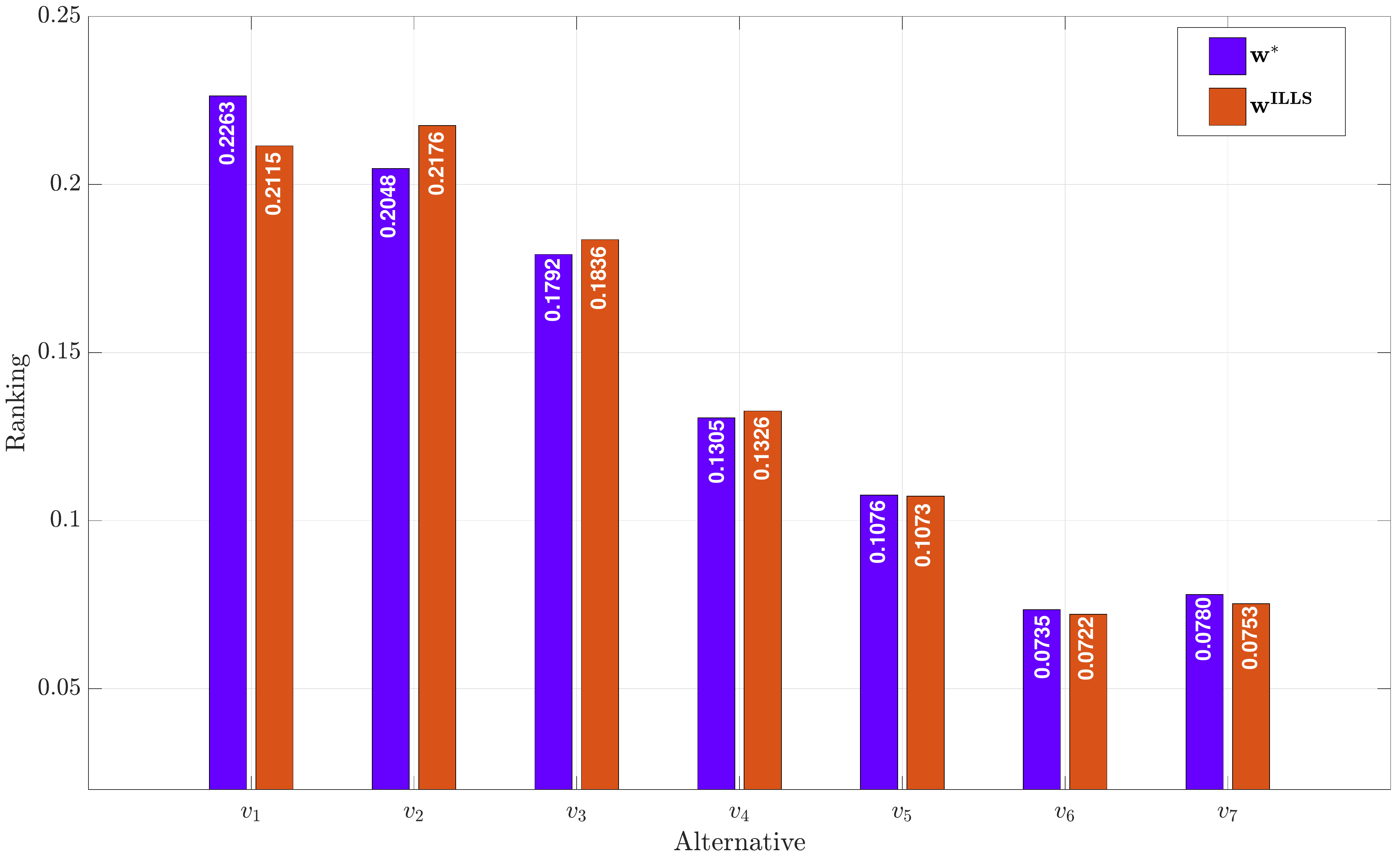}}}
\caption{\textcolor{black}{Comparison of the results of ILLS-MWOV and ILLS with respect to the example given in \citep{csato2016incomplete}, for which ILLS is known to result in ordinal violations.}}
\label{fig:res}
\end{figure*}

$$
\textcolor{black}{\mathcal{A}}=\begin{bmatrix}
\color{black}1\color{black}&2&0&0&0&2&2\\
    {1}/{2}&\color{black}1\color{black}&2&2&0& 0 &0\\
    0 &{1}/{2}& \color{black}1\color{black}& 2 &2& 0 &0\\
    0 &{1}/{2}& {1}/{2} &\color{black}1\color{black}& 2& 2& 0\\
    0 &0 &{1}/{2}&{1}/{2}& \color{black}1\color{black}& 2& 2\\
    {1}/{2}&0& 0 &{1}/{2} &{1}/{2}& \color{black}1\color{black}& 0\\
    {1}/{2}&0 &0& 0& {1}/{2}& 0 &\color{black}1\color{black}
\end{bmatrix}.
$$
Figure~\ref{fig:res}.(b) shows the ranking ${\bf w}^{\textcolor{black}{ILLS}}$  and ${\bf w}^*$ obtained via the \textcolor{black}{incomplete} logarithmic least squares approach and via the \textcolor{black}{ILLS-MWOV} approach {\color{black}(considering $\epsilon$ = 0.1)},  with blue and red bars, respectively.
\color{black}Notice that no entry $\mathcal{A}_{ij}$ with $i\neq j$ is equal to one; hence, we have $\tau=0$.
Notice further that the ranking ${\bf w}^{\textcolor{black}{ILLS}}$ results in one violation of the ordering, since $\textcolor{black}{\mathcal{A}}_{12}>\color{black}1\color{black}$ but $w^{\textcolor{black}{ILLS}}_1<w^{\textcolor{black}{ILLS}}_2$; in other words, it holds $\sigma^{\textcolor{black}{ILLS}}=10\ln(2)\approx6.931$, while the objective function of Problem~\ref{prob:problem2} is equal to $1.6963$.
\color{black}
Let us now consider the result of the \textcolor{black}{ILLS-MWOV}. Specifically, by solving Problem~\ref{prob:problem11}, we obtain a pairwise ordering $\{x^*_{ij}\}$ that can be summarized by the $n\times n$ matrix $X^*$ such that $X^*_{ij}=x^*_{ij}$, i.e., 
$${\color{black}
X^* =\begin{bmatrix}
     0   &  1   &  1   &  1  &   1 &   1   &  1\\
     0    & 0    & 1    & 1    & 1    &  1    & 1\\
     0    & 0  &   0   &  1   &  1    & 1   & 1\\
     0    & 0   &  0   &  0  &   1   &  1   &  1\\
     0    & 0    & 0   &  0  &   0  &   1  &   1\\
     0   &  0 &    0  &   0   &  0  &   0  &   0\\
     0    & 0   &  0  &   0 &    0  &   0   &  0
\end{bmatrix}.}
$$
Notice that $X^*$ is in accordance with the pairwise ordinal preferences  induced by ${\textcolor{black}{\mathcal{A}}}$. Moreover, it holds \color{black}$\sigma^*=11\ln(2)\approx7.6246$.
Notice that, since the graph is acyclic, we are guaranteed by Theorem~\ref{theo:unique} that the solution found with the proposed approach is unique. \color{black}
Let us now consider the solution of Problem~\ref{prob:problem2}, where the constraints depend on the above choice for $\{x^*_{ij}\}$; the resulting ranking is shown in Figure~\ref{fig:res}.(b).
{\color{black}Notice that, in contrast to the relation $w_1^{\textcolor{black}{ILLS}} < w_2^{\textcolor{black}{ILLS}}$, the result of the proposed \textcolor{black}{ILLS-MWOV} approach is characterized by $w^*_1>w^*_2$; hence, it preserves the relations between the two alternatives.}
Notably, the objective function of Problem~\ref{prob:problem2} is equal to {\color{black}$1.7232$, i.e., an increase of just $+1.6\%$ with respect to the results obtained for ${\bf w}^{\textcolor{black}{ILLS}}$. We can affirm that the distribution of the weights {$\bf w^*$}, with respect to the distribution ${\bf w}^{\textcolor{black}{ILLS}}$, is slightly suboptimal but also preserves the ordinal constraints. 

For the sake of completeness, we adopt the criteria described in Section~\ref{sec:criteria} in order to evaluate the performance of the proposed \textcolor{black}{ILLS-MWOV} approach with respect to the classic \textcolor{black}{ILLS} method.
Concerning the MVs criterion (see Section~\ref{sec:MV}), it confirms the results obtained by analyzing the \textcolor{black}{weighted} ordinal satisfaction index $\sigma$; in fact, the criterion computed for both the approaches yields $MVs^{\textcolor{black}{ILLS}} = 1$ (due to the incorrect order between the first two alternatives) and $MVs^{\textcolor{black}{ILLS-MWOV}} = 0$ (because of the correct ordering between the first two alternatives). 

Let us  compare the two approaches in terms of the TDs criterion (see Section~\ref{sec:TD}). In this case we obtain $TDs^{\textcolor{black}{ILLS}} = 5.71$, while $TDs^{\textcolor{black}{ILLS-MWOV}} = 6.17$. This result {\color{black} is expected due to the additional constraints introduced in the proposed formulation}. Let us now analyze the element-wise absolute differences $\Delta_{ij}$ for the available entries, which we define as 
$$
\Delta_{ij}=\begin{cases}
\left |{ \frac{ w^{\textcolor{black}{ILLS}}_i}{ w^{\textcolor{black}{ILLS}}_j}} - { \frac{ w^{*}_i}{ w^{*}_j}}\right |,& \mbox{ if } \textcolor{black}{\mathcal{A}}_{ij}\neq 0\\
0,& \mbox{ otherwise.}
\end{cases}
$$ 
Collecting all entries $\Delta_{ij}$ into the $n\times n$ matrix $\Delta$ we have that
$$
\Delta  =
\begin{bmatrix} 
0&	0.133&	0&	0&	0&	0.150&	0.093\\
0.124&	0&	0.042&	0.072&	0&	0&	0\\
0&	0.031&	0&	0.011&	0.046&	0&	0\\
0&0.028&	0.006&0&	0.023&	0.061&	0\\
0&	0&	0.016&	0.015&	0&	0.022&	0.045\\
0.017&	0&	0&	0.019&	0.010&0&	0\\
0.011&	0&	0&	0&     0.023&	0&	0\\
 \end{bmatrix}
$$
and we observe that the largest  absolute differences are attained for entries that involve $v_1$ or $v_2$ (in particular, the largest attained values are $\Delta_{16}=0.15,$ $\Delta_{12}=0.133$ and $\Delta_{21}=0.124$), i.e., the pair for which the ordinal preferences encoded in $\textcolor{black}{\mathcal{A}}$ are violated using the \textcolor{black}{ILLS} approach. 

\begin{figure*}
\centering
\subfigure[Example of nonambiguous cycle]{%
\resizebox {.35\textwidth} {!}{
\begin{tikzpicture}[
            > = stealth, 
            shorten > = 1pt, 
            auto,
            node distance = 0.7cm, 
            semithick 
        ]

        \tikzstyle{every state}=[
            draw = black,
            thick,
            fill = white,
            minimum size = 4mm,
            minimum width=5mm
        ]

 	\node[state] (v1) at ({0*51.4285714}:5){$v_1$};
        \node[state] (v2) at ({1*51.4285714}:5){$v_2$};
        \node[state] (v3)at ({2*51.4285714}:5){$v_3$};
        \node[state] (v4) at ({3*51.4285714}:5) {$v_4$};
        \node[state] (v5) at ({4*51.4285714}:5) {$v_5$};
        \node[state] (v6) at ({5*51.4285714}:5) {$v_6$};
  	\node[state] (v7) at ({6*51.4285714}:5) {$v_7$};
             
\path[sloped,->, above,very thick] (v1) edge node {$\mathcal{A}_{12}=1$} (v2);
\path[sloped,->, above,very thick] (v2) edge node {$\mathcal{A}_{23}=2$} (v3);
\path[sloped,->, above,very thick] (v3) edge node {$\mathcal{A}_{34}=3$} (v4);
\path[sloped,->, above,very thick] (v4) edge node {$\mathcal{A}_{45}=4$} (v5);
\path[sloped,->, above,very thick] (v5) edge node {$\mathcal{A}_{56}=5$} (v6);
\path[sloped,->, above,very thick] (v6) edge node {$\mathcal{A}_{67}=6$} (v7);
\path[sloped,->, above,very thick] (v7) edge node {$\mathcal{A}_{71}=7$} (v1);
\end{tikzpicture}
}
}\hspace{5pt}
\subfigure[Results in terms of MVs and TDs obtained via the proposed approach and using other methods in the literature.]{%
\resizebox*{7cm}{!}{\includegraphics{./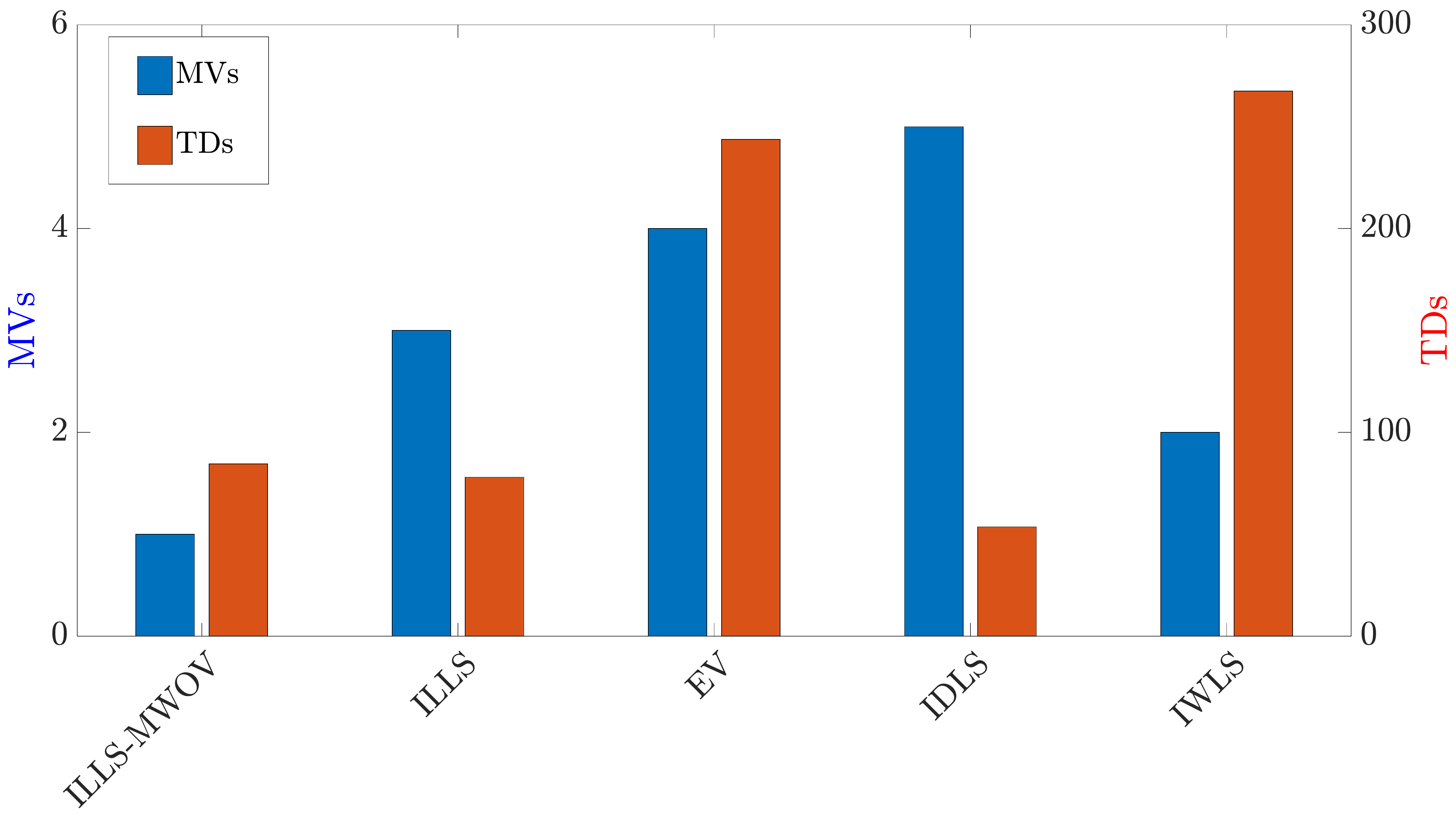}}}
\caption{\textcolor{black}{Results in terms of MVs and TDs for different methodologies with respect to a nonambiguous cycle featuring a tie.}}
\label{fig:ciclononambiguo}
\end{figure*}

\textcolor{black}{Let us now consider the graph reported in Figure~\ref{fig:ciclononambiguo}.(a), i.e., a non-ambiguous cycle featuring a tie.
With respect to this instance, the ILLS-MWOV solution is such that $x_{21}=1$ (paying a penalty equal to $\delta\ll 1$)  and all other $x_{i,i+1}=1$ (plus the additional terms $x_{ij}$ required to enforce transitivity).
In Figure~\ref{fig:ciclononambiguo}.(b) we show the results in terms of MVs and TDs obtained according to ILLS-MWOV, ILLS, EV, IDLS and IWLS.
According to the figure, the proposed approach results in remarkably smaller values of MVs, which is related to the satisfaction of ordinal preferences, while the results in terms of TDs (related to cardinal preferences) is slightly larger than  ILLS and IDLS, but remarkably smaller than EV and IWLS.
Overall, this suggests that the proposed approach has remarkably better performance with respect to metrics related to ordinal information and good performances with respect to metrics related to cardinal information.}

{\color{black}
\subsection{Comparison with the State of the Art}
\label{sec:compar}
In order to experimentally validate the proposed approach, we compare in this section the \textcolor{black}{ILLS-MWOV} methodology with \textcolor{black}{ILLS} (see Section~\ref{subsec:log}), \textcolor{black}{IDLS} (see Section~\ref{sec:sdls}), \textcolor{black}{IWLS} (see Section~\ref{sec:swls}) and EV (see Section~\ref{sec:ev}) in terms of metrics related to the ordinal violations (i.e.,  $\sigma$ and MVs) and in terms of cardinal violations (i.e., TDs). Such a comparison is undertaken by considering the effect of growing levels of inconsistency and varying the density $\rho$ of the graph (\textcolor{black}{we reiterate that the graph density is the ratio between the cardinality of the edge set and the cardinality of the edges in a compete graph}). In more detail, we consider random instances encompassing $n=7$ alternatives; for each instance we build a connected random graph with the desired graph density $\rho$ (i.e., the percentage of links with respect to those in a complete graph with the same number of nodes) and we generate a nominal weight vector ${\bf w}$, which we use to  construct a nominal matrix $\textcolor{black}{\mathcal{A}}$ such that $\textcolor{black}{\mathcal{A}}_{ij}=w_i/w_j$ whenever $(v_i,v_j)\in E$. Then, we perturbate each nominal ratio $\textcolor{black}{\mathcal{A}}_{ij}$ by setting $$\widetilde{\textcolor{black}{\mathcal{A}}}_{ij}=\textcolor{black}{\mathcal{A}}_{ij}e^{\eta_{ij}},$$ $\eta_{ij}$ being a normal random variable with zero mean and standard deviation $\gamma$, which we refer to as the \textcolor{black}{\em degree of perturbation}}. In other words, $e^{\eta_{ij}}$ is a log-normal perturbation which has the effect to attenuate the ratio  $\textcolor{black}{\mathcal{A}}_{ij}$ (i.e., when $\eta_{ij}$ assumes negative values) or to enhance the ratio  $\textcolor{black}{\mathcal{A}}_{ij}$ (i.e., when $\eta_{ij}$ assumes positive values).
Notice that, in order to ensure local consistency (i.e., consistency at the level of each single pair of alternatives), we only directly perturbate $\widetilde{\textcolor{black}{\mathcal{A}}}_{ij}$, while we set $\widetilde{\textcolor{black}{\mathcal{A}}}_{ji}=1/\widetilde{\textcolor{black}{\mathcal{A}}}_{ij}$. In the trials, we set the parameters $\epsilon,\delta$ in \textcolor{black}{ILLS-MWOV} to $\epsilon=\delta=10^{-4}$, and we show the results in terms of average and standard deviation over $m=100$ trials for each choice of \textcolor{black}{the parameters $\rho$ and $\gamma$}. 

\begin{figure*}
\begin{center} 
\includegraphics[width=.99\textwidth]{./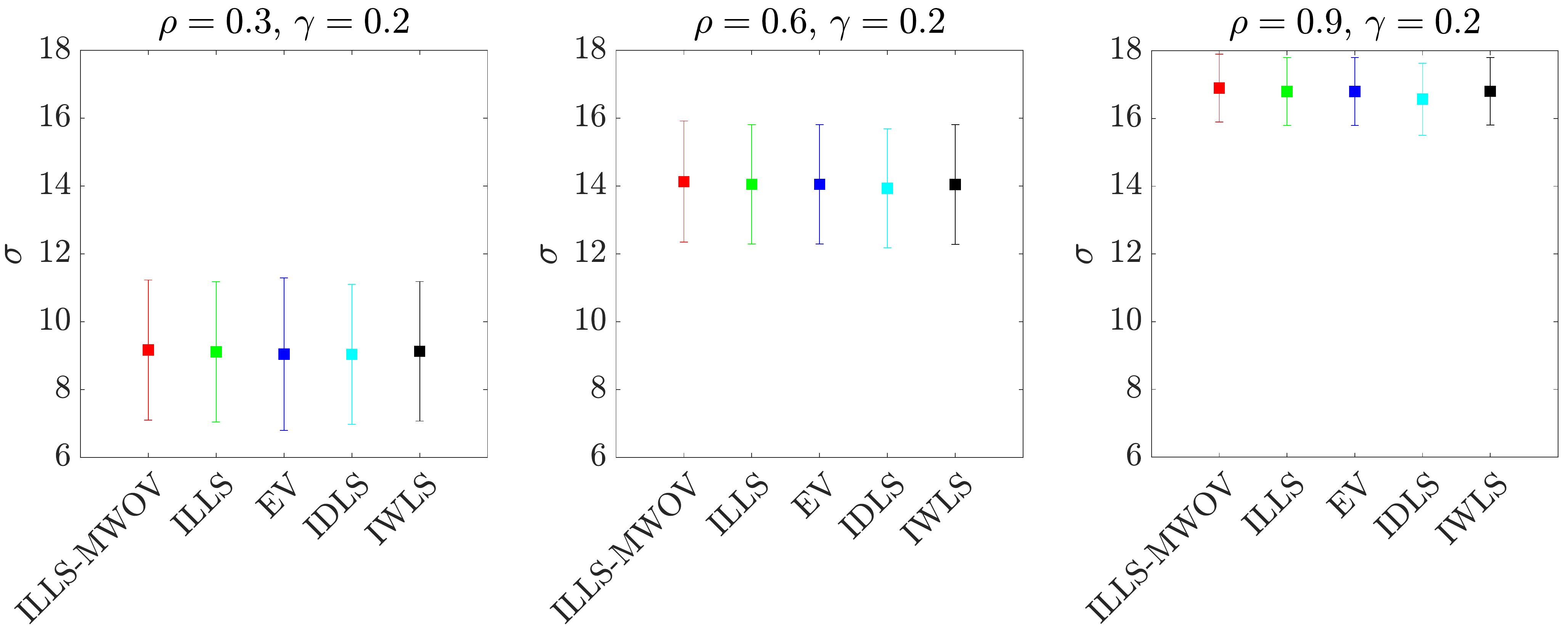}
\caption{\textcolor{black}{Weighted} Ordinal Satisfaction Index $\sigma$ comparison among \textcolor{black}{ILLS-MWOV, ILLS, EV, IDLS and IWLS} for multiple values of graph density $\rho$. For each choice of density we show the results in terms of average and standard deviation over $m=100$ trials, setting $\epsilon=\delta=10^{-4}$ and $\gamma=0.2$. \textcolor{black}{The parameters $\gamma,\rho$ adopted in each simulation are reported above the corresponding plot.}}
 \label{fig:XXX}
\end{center}
\end{figure*}

\begin{figure*}
\begin{center} 
\includegraphics[width=.99\textwidth]{./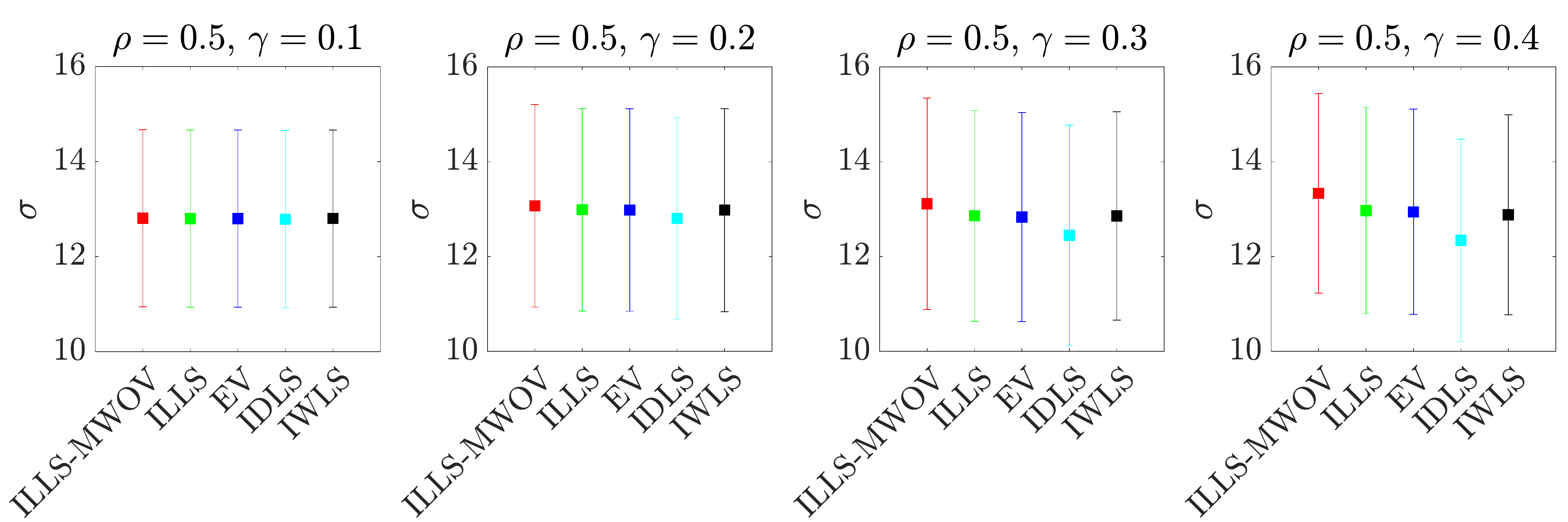}
\caption{\textcolor{black}{Weighted} Ordinal Satisfaction Index $\sigma$ comparison among \textcolor{black}{ILLS-MWOV}, \textcolor{black}{ILLS, EV, IDLS and IWLS} for multiple values of the \textcolor{black}{degree of perturbation} $\gamma$. For each choice of $\gamma$ we show the results in terms of average and standard deviation over $m=100$ trials, setting $\epsilon=\delta=10^{-4}$ and the graph density to $\rho=0.5$. \textcolor{black}{The parameters $\gamma,\rho$ adopted in each simulation are reported above the corresponding plot.}}
 \label{fig:YYY}
\end{center}
\end{figure*}

Let us now consider the performances in terms of the index\footnote{In the simulations we always have $\mathcal{A}_{ij}\neq 1$ for $i\neq j$; hence, $\tau=0$.}  $\sigma$\ (we recall that the larger is $\sigma$, the more the result matches with the ordinal preferences encoded by the ratios $\textcolor{black}{\mathcal{A}}_{ij}$), considering graphs with different \textcolor{black}{values of $\rho$ with fixed $\gamma$} (Figure~\ref{fig:XXX}) and \textcolor{black}{for different values of $\gamma$ with fixed $\rho$} (Figure~\ref{fig:YYY}). 
\textcolor{black}{According to the plots, while performances are comparable with the other methods in the case of small $\gamma$, irrespectively of $\rho$ (i.e., Figure~\ref{fig:XXX} and the leftmost plot in Figure~\ref{fig:YYY}), as $\gamma$ grows (see the central and right panels in Figure~\ref{fig:YYY}), the divide with other methods appears more evident, especially with respect to IDLS.
Notice that, according to Figure~\ref{fig:XXX}, we observe that when $\gamma$ is constant and $\rho$ grows, the index $\sigma$ tends to grow for all considered methods (this is due to the increased number of links, which results in an increased number of pairs for which a reward is obtained within $\sigma$).
As for the effect of growing $\gamma$ with fixed $\rho$, we observe in Figure~\ref{fig:YYY} that the proposed method outperforms all others, especially when $\gamma$ is large. In fact, the increased amount of links is likely to introduce further violations of the ordinal preferences, thus calling for a methodology able to deal with ordinal information.}

\begin{figure*}
\begin{center} 
\includegraphics[width=.99\textwidth]{./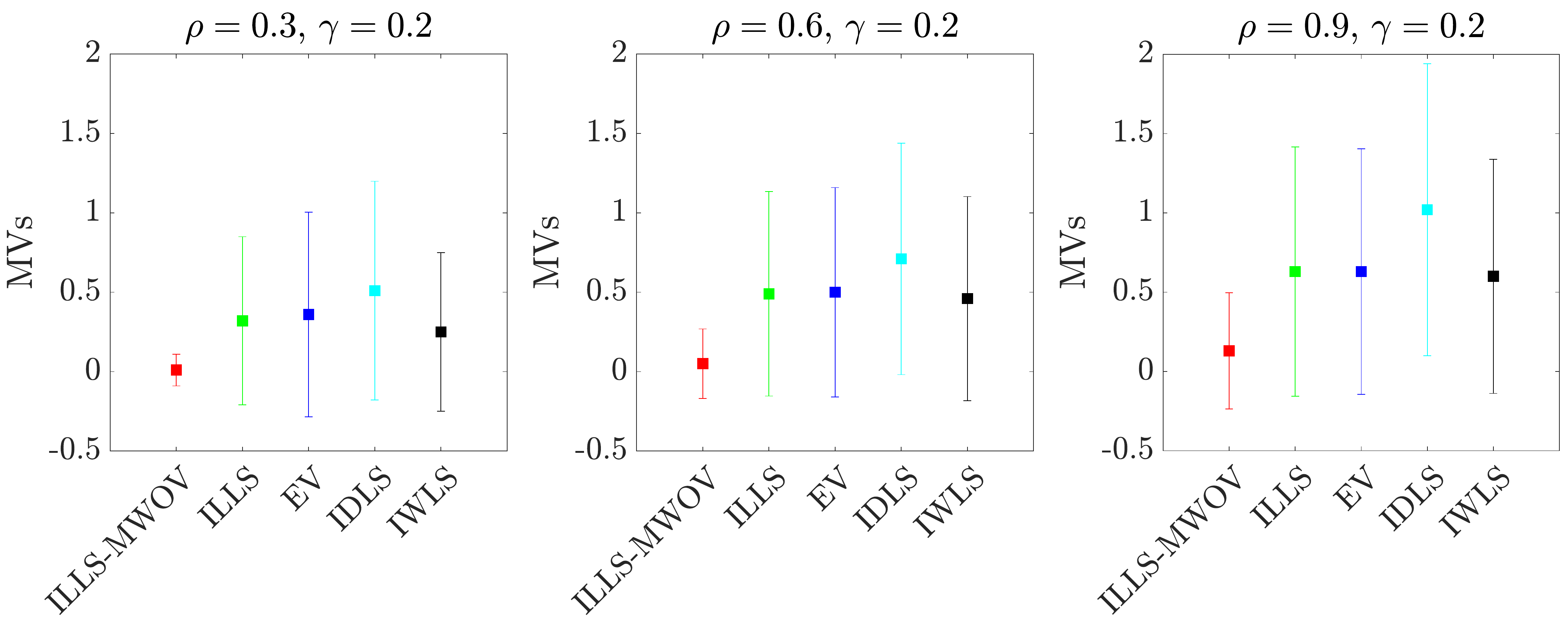}
\caption{Maximum Violations (MVs)  comparison among \textcolor{black}{ILLS-MWOV}, \textcolor{black}{ILLS, EV, IDLS and IWLS} for multiple values of graph density $\rho$. For each choice of density we show the results in terms of average and standard deviation over $m=100$ trials, setting $\epsilon=\delta=10^{-4}$ and $\gamma=0.2$. \textcolor{black}{The parameters $\gamma,\rho$ adopted in each simulation are reported above the corresponding plot.}}
 \label{fig:MV_dens}
\end{center}
\end{figure*}

\begin{figure*}
\begin{center} 
\includegraphics[width=.99\textwidth]{./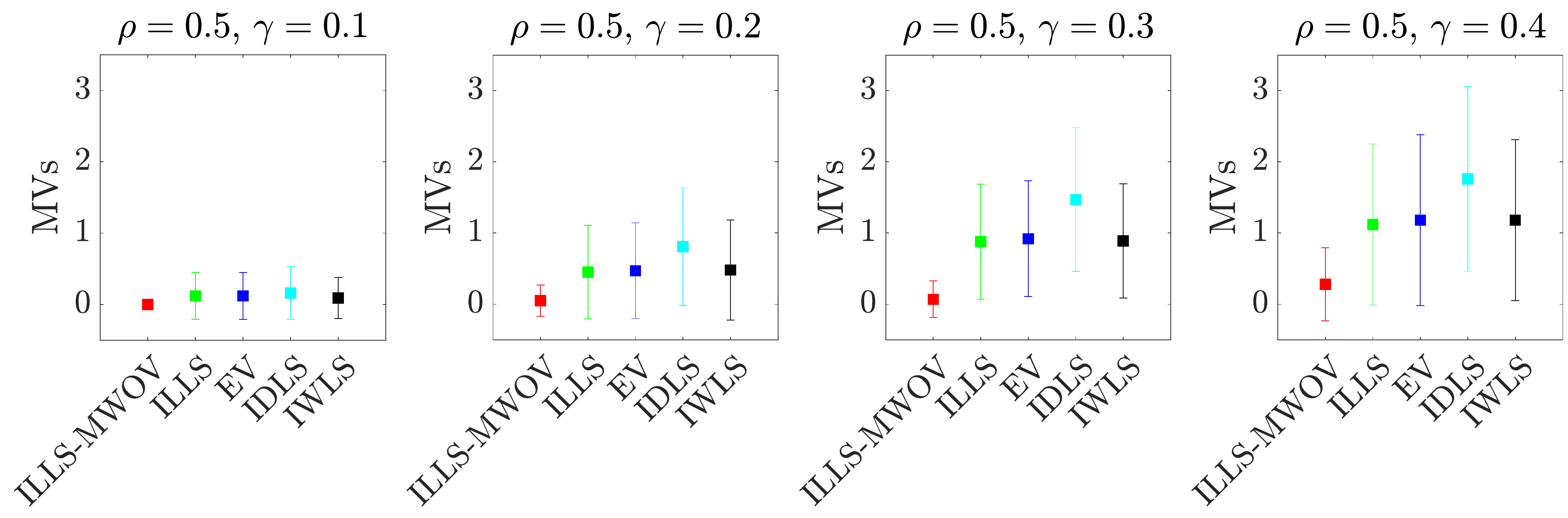}
\caption{Maximum Violations (MVs) comparison among \textcolor{black}{ILLS-MWOV}, \textcolor{black}{ILLS, EV, IDLS and IWLS} for multiple values of the \textcolor{black}{degree of perturbation} $\gamma$. For each choice of the inconsistency we show the results in terms of average and standard deviation over $m=100$ trials, setting $\epsilon=\delta=10^{-4}$ and the graph density to  $\rho=0.5$. \textcolor{black}{The parameters $\gamma,\rho$ adopted in each simulation are reported above the corresponding plot.}}
 \label{fig:MV_inc}
\end{center}
\end{figure*}

In order to validate the ability of the proposed method to provide a ranking that is in agreement with the ordinal preferences, we now focus our attention on the MVs metric (in this case, the smaller is MVs the more the ranking is in accordance with the available ordinal information), considering the effect of increasing \textcolor{black}{$\rho$ with fixed $\gamma$} (Figure~\ref{fig:MV_dens}) and the effect of increasing \textcolor{black}{$\gamma$ with fixed $\rho$} (Figure~\ref{fig:MV_inc}).

We point out that,  differently from all other methods, \textcolor{black}{IDLS} has been specifically designed to optimize TDs.
Notice that none of the considered methods, including the proposed one, has been specifically designed to optimize MVs. 
According to the figures, in all cases the proposed \textcolor{black}{ILLS-MWOV} approach shows the best performance; the divide is particularly evident as \textcolor{black}{$\rho$ and $\gamma$ grow}.
Notice that the performance of all other methods is comparable, except for the \textcolor{black}{IDLS} approach, which yields the worst results.

\begin{figure*}
\begin{center} 
\includegraphics[width=.99\textwidth]{./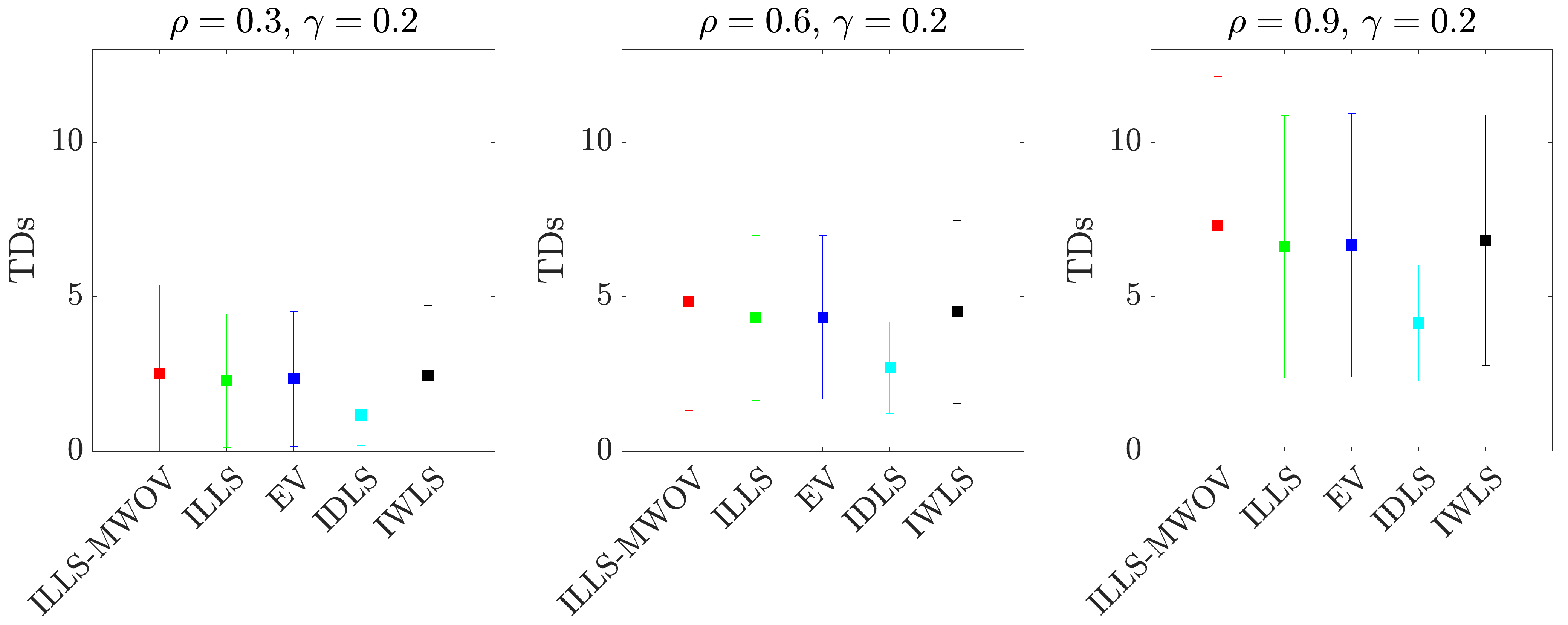}
\caption{Total Deviation (TDs) comparison among \textcolor{black}{ILLS-MWOV},  \textcolor{black}{ILLS, EV, IDLS and IWLS} for multiple values of graph density $\rho$. For each choice of density we show the results in terms of average and standard deviation over $m=100$ trials, setting $\epsilon=\delta=10^{-4}$ and $\gamma=0.2$. \textcolor{black}{The parameters $\gamma,\rho$ adopted in each simulation are reported above the corresponding plot.}}
 \label{fig:TD_dens}
\end{center}
\end{figure*}

\begin{figure*}
\begin{center} 
\includegraphics[width=.99\textwidth]{./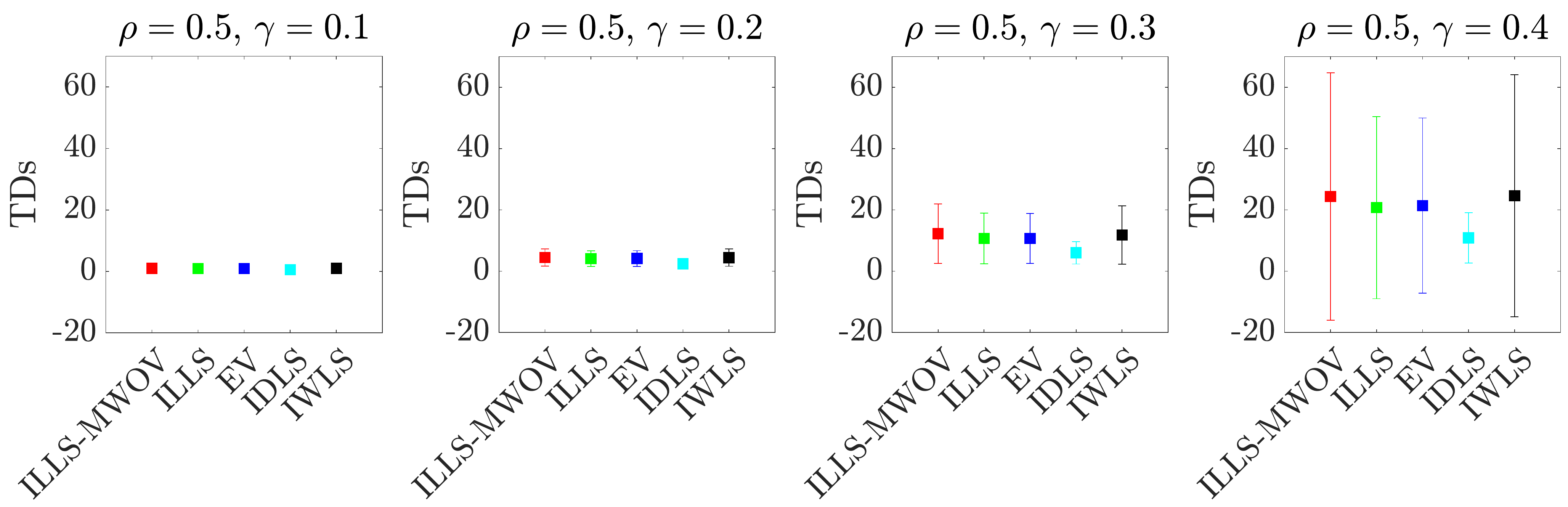}
\caption{Total Deviations (TDs) comparison among \textcolor{black}{ILLS-MWOV},  \textcolor{black}{ILLS, EV, IDLS and IWLS} for multiple values of \textcolor{black}{the degree of perturbation} $\gamma$. For each choice of the inconsistency we show the results in terms of average and standard deviation over $m=100$ trials, setting $\epsilon=\delta=10^{-4}$ and the graph density to  $\rho=0.5$. \textcolor{black}{The parameters $\gamma,\rho$ adopted in each simulation are reported above the corresponding plot.}}
 \label{fig:TD_inc}
\end{center}
\end{figure*}

Let us now consider the results obtained by the different methodologies in terms of satisfaction of the cardinal information encoded in the ratios $\textcolor{black}{\mathcal{A}}_{ij}$; to this end we compare the methods in terms of the TDs metric,
considering the effect of increasing \textcolor{black}{$\rho$ with fixed $\gamma$} (Figure~\ref{fig:TD_dens}) and the effect of increasing \textcolor{black}{$\gamma$ with fixed $\rho$} (Figure~\ref{fig:TD_inc}).
It can be noted that, although the proposed \textcolor{black}{ILLS-MWOV} approach is focused on the ordinal dimension, the results obtained are in all cases comparable to,  but slightly  worse than \textcolor{black}{ILLS}, EV and \textcolor{black}{IWLS}.

Overall, these simulations suggest that \textcolor{black}{ILLS-MWOV}  is very effective in minimizing violation of ordinal preferences; moreover, it is comparable to most of the approaches in the literature in terms of violation of cardinal preferences, while being outperformed only by an approach that explicitly focuses on cardinal information.
Figure~\ref{fig:pareto} summarizes such a situation by providing a synoptic view of the performance of the different algorithms in terms of ordinal and cardinal violations.
Specifically, \textcolor{black}{ILLS}, EV and \textcolor{black}{IWLS} have substantially comparable performance, and represent a tradeoff between ordinal and cardinal effectiveness. On the other and, as mentioned before, \textcolor{black}{IDLS} is focused on cardinal information and consequently it significantly suboptimal in terms of ordinal information.
Finally, the proposed approach exhibits an ``opposite" behavior with respect to \textcolor{black}{IDLS}, emphasizing the agreement of the ranking with ordinal information.

%
%
%
%
%

}

\begin{figure*}
\begin{center} 
\includegraphics[width=.9\textwidth]{./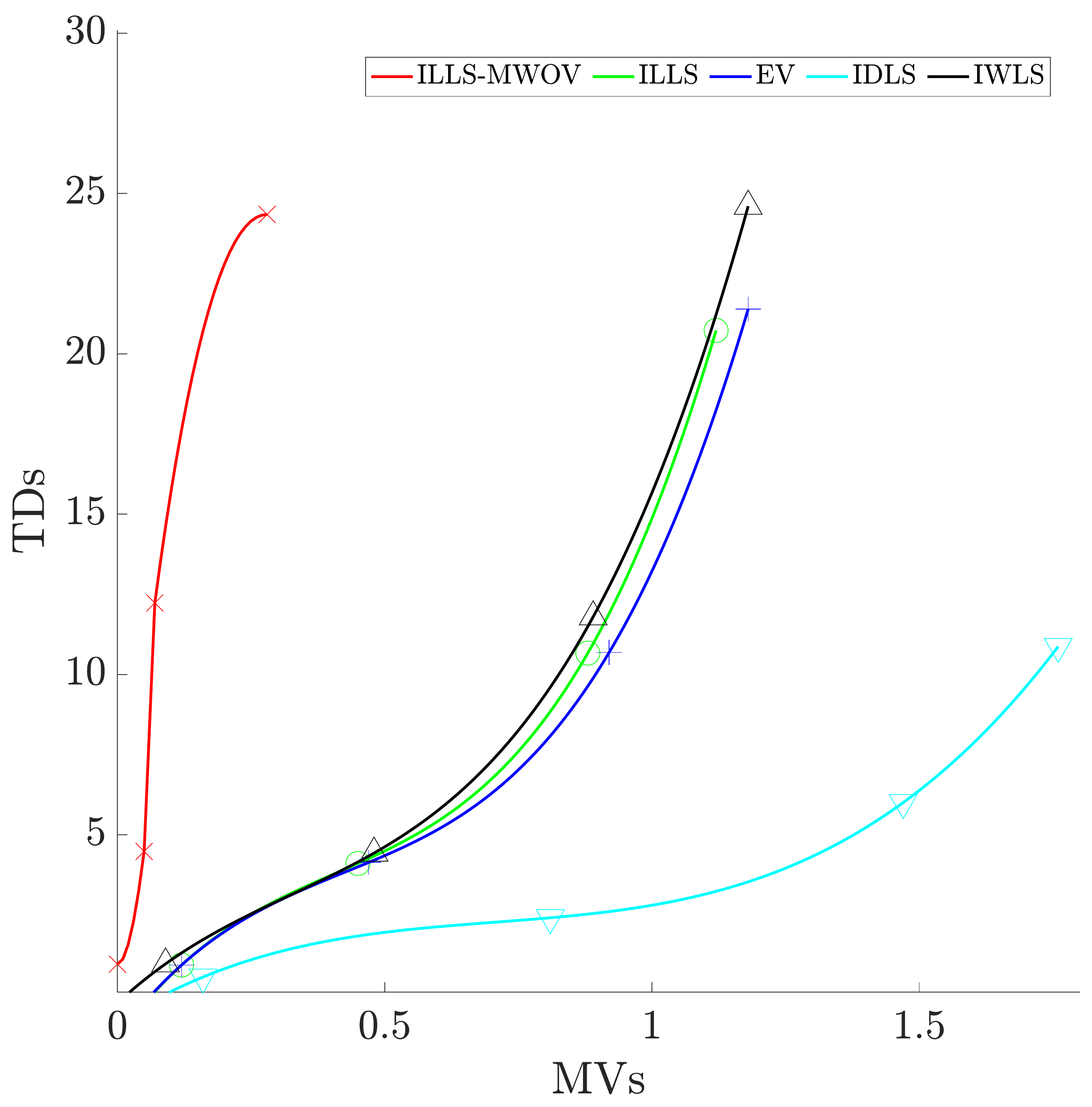}
\caption{At-a-glance view of the performance of the considered methodologies in terms of both MVs and TDs. For each method we consider $m=100$ graphs with fixed density equal to $0.5$ and  we plot the results in terms of the average MVs and TDs scores obtained for different levels of inconsistency using different markers (the continuous curves represent an interpolation of such values).}
 \label{fig:pareto}
\end{center}
\end{figure*}

\begin{figure*}
\begin{center} 
\includegraphics[width=.9\textwidth]{./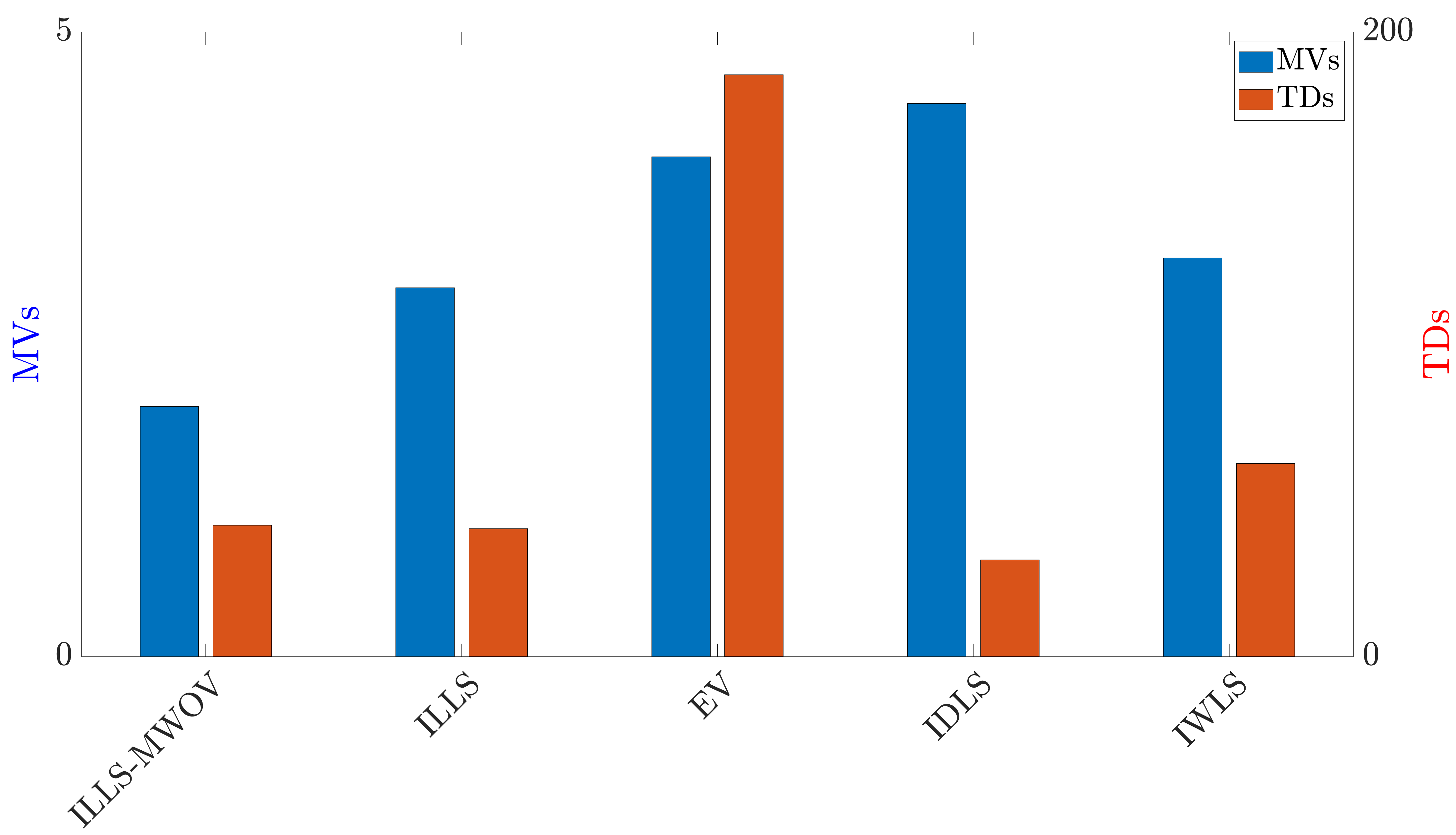}
\caption{\textcolor{black}{Comparison of ILLS-MWOV, ILLS, EV, IDLS and IWLS in terms of MVs and TDs with respect to $20$ instances obtained by permutation of the links in an ambiguous cycle.}}
 \label{fig:cicliambigui}
\end{center}
\end{figure*}

\color{black}{To conclude, we show in Figure~\ref{fig:cicliambigui} the results in terms of MVs and TDs of the different methodologies with respect to instances composed by a sigle ambiguous cycle with $n=7$ nodes and two links with associated weight $\mathcal{A}_{\min}=2$, while the other links have unique values in $\{3,4,\ldots,6\}$. In more detail, we report the average values of MVs and TDs over $21$ graphs, which correspond to all possible permutations of the two weights equal to two in the cycle, while the other weights are assigned to the remaining links by increasing order of node identifier $i$.
According to the results, ILLS-MWOV outperforms all other methods in terms of MVs; as for TDs, the proposed method is slightly worse than IDLS and ILLS, while results are definitely better with respect to SWLS and EV, which shows the worst performance over such instances.
Notably, by considering simultaneously the results in terms of MVs and TDs, the ILLS-MWOV method dominates the EV and IWLS methodologies; as for the comparison with ILLS and IDLS, the proposed method results in a large improvement in terms of MVs with a degradation in terms of $TDs$ that is quite small with respect to ILLS and slightly more evident with respect to IDLS.
\color{black}

\section{Conclusions}
In this paper we develop a novel approach to reconstruct the ranking of a set of alternatives based on incomplete pairwise comparisons. Specifically, the proposed approach \textcolor{black}{blends cardinal and ordinal information}. This is done by considering two cascading optimization problems: first, we aim at finding an ordinal ranking that maximizes the accordance with the available information, then we seek a cardinal ranking via the logarithmic least-squares approach, with the additional constraint that the previously chosen ordinal ranking  is satisfied. Simulations show that the proposed approach is able to generate rankings that are not in contrast with the available information, while traditional approaches from the state of the art {\color{black} (i.e., \textcolor{black}{incomplete} logarithmic least squares, the \textcolor{black}{incomplete}  direct least squares, the weighted least squares, and approaches based on the eigenvectors) may fail. The proposed approach has been evaluated and compared, in terms of two common criteria, with respect to the other approaches from the state of the art. Our approach is able to guarantee the best results in terms of MVs (ordinal violations). Concerning the TDs criterion, our approach proposes solutions comparable to the classic \textcolor{black}{ILLS} approach. In this analysis, the gap between our solution and the \textcolor{black}{ILLS} solution is due to the presence of additional constraint necessary to preserve the ordinal ranking.
The simulation campaigns show that our approach proposes good solutions by varying both data inconsistency and graph density (i.e. the number of given ratios in $\textcolor{black}{\mathcal{A}}$).
}
Future work will aim at applying the proposed methodology to real-world situations, as well as considering a distributed computing setting.

\section*{Acknowledgements}
The authors would like to express their sincere gratitude to Dr.J\'anos F\"ul\"op for his valuable comments on the earlier versions of the paper.

\bibliographystyle{plain}

\begin{thebibliography}{}

\bibitem[\protect\citeauthoryear{Abel, Mikhailov, and Keane}{Abel
  et~al.}{2018}]{AbelMikhailovKeane2018}
Abel, E., L.~Mikhailov, and J.~Keane (2018).
\newblock Inconsistency reduction in decision making via multi-objective
  optimisation.
\newblock {\em European Journal of Operational Research\/}~{\em 267\/}(1),
  212--226.

\bibitem[\protect\citeauthoryear{Barzilai}{Barzilai}{1997}]{Barzilai1997}
Barzilai, J. (1997).
\newblock Deriving weights from pairwise comparison matrices.
\newblock {\em Journal of the Operational Research Society\/}~{\em 48\/}(12),
  1226--1232.

\bibitem[\protect\citeauthoryear{Barzilai, Cook, and Golany}{Barzilai
  et~al.}{1987}]{barzilai1987consistent}
Barzilai, J., W.~D. Cook, and B.~Golany (1987).
\newblock Consistent weights for judgements matrices of the relative importance
  of alternatives.
\newblock {\em Operations research letters\/}~{\em 6\/}(3), 131--134.

\bibitem[\protect\citeauthoryear{Barzilai and Golany}{Barzilai and
  Golany}{1990}]{BarzilaiGolany1990}
Barzilai, J. and B.~Golany (1990).
\newblock Deriving weights from pairwise comparison matrices: The additive
  case.
\newblock {\em Operations Research Letters\/}~{\em 9\/}(6), 407--410.

\bibitem[\protect\citeauthoryear{Ben{\'\i}tez, Carpitella, Certa, and
  Izquierdo}{Ben{\'\i}tez et~al.}{2019}]{BenitezCarpitellaCertaIzquierdo2018}
Ben{\'\i}tez, J., S.~Carpitella, A.~Certa, and J.~Izquierdo (2019).
\newblock Characterization of the consistent completion of analytic hierarchy
  process comparison matrices using graph theory.
\newblock {\em Journal of Multi-Criteria Decision Analysis\/}~{\em 26\/}(1-2),
  3--15.

\bibitem[\protect\citeauthoryear{Blankmeyer}{Blankmeyer}{1987}]{blankmeyer1987approaches}
Blankmeyer, E. (1987).
\newblock Approaches to consistency adjustment.
\newblock {\em Journal of Optimization Theory and Applications\/}~{\em
  54\/}(3), 479--488.

\bibitem[\protect\citeauthoryear{Boz{\'o}ki}{Boz{\'o}ki}{2008}]{bozoki2008solution}
Boz{\'o}ki, S. (2008).
\newblock Solution of the least squares method problem of pairwise comparison
  matrices.
\newblock {\em Central European Journal of Operations Research\/}~{\em
  16\/}(4), 345.

\bibitem[\protect\citeauthoryear{Boz\'oki, Csat\'o, and Temesi}{Boz\'oki
  et~al.}{2016}]{BozokiCsatoTemesi2016}
Boz\'oki, S., L.~Csat\'o, and J.~Temesi (2016).
\newblock An application of incomplete pairwise comparison matrices for ranking
  top tennis players.
\newblock {\em European Journal of Operational Research\/}~{\em 248\/}(1),
  211--218.

\bibitem[\protect\citeauthoryear{Boz{\'o}ki, F{\"u}l{\"o}p, and
  R{\'o}nyai}{Boz{\'o}ki et~al.}{2010}]{bozoki2010optimal}
Boz{\'o}ki, S., J.~F{\"u}l{\"o}p, and L.~R{\'o}nyai (2010).
\newblock On optimal completion of incomplete pairwise comparison matrices.
\newblock {\em Mathematical and Computer Modelling\/}~{\em 52\/}(1-2),
  318--333.

\bibitem[\protect\citeauthoryear{Boz{\'o}ki and Tsyganok}{Boz{\'o}ki and
  Tsyganok}{2019}]{bozoki2019logarithmic}
Boz{\'o}ki, S. and V.~Tsyganok (2019).
\newblock The (logarithmic) least squares optimality of the arithmetic
  (geometric) mean of weight vectors calculated from all spanning trees for
  incomplete additive (multiplicative) pairwise comparison matrices.
\newblock {\em International Journal of General Systems\/}~{\em 48\/}(4),
  362--381.

\bibitem[\protect\citeauthoryear{Brunelli}{Brunelli}{2018}]{brunelli2018survey}
Brunelli, M. (2018).
\newblock A survey of inconsistency indices for pairwise comparisons.
\newblock {\em International Journal of General Systems\/}~{\em 47\/}(8),
  751--771.

\bibitem[\protect\citeauthoryear{Brunelli and Fedrizzi}{Brunelli and
  Fedrizzi}{2019}]{brunelli2019general}
Brunelli, M. and M.~Fedrizzi (2019).
\newblock A general formulation for some inconsistency indices of pairwise
  comparisons.
\newblock {\em Annals of Operations Research\/}~{\em 274\/}(1-2), 155--169.

\bibitem[\protect\citeauthoryear{B{\"u}y{\"u}k{\"o}zkan, Kahraman, and
  Ruan}{B{\"u}y{\"u}k{\"o}zkan et~al.}{2004}]{buyukozkan2004fuzzy}
B{\"u}y{\"u}k{\"o}zkan, G., C.~Kahraman, and D.~Ruan (2004).
\newblock A fuzzy multi-criteria decision approach for software development
  strategy selection.
\newblock {\em International Journal of General Systems\/}~{\em 33\/}(2-3),
  259--280.

\bibitem[\protect\citeauthoryear{Chandran, Golden, and Wasil}{Chandran
  et~al.}{2005}]{chandran2005linear}
Chandran, B., B.~Golden, and E.~Wasil (2005).
\newblock Linear programming models for estimating weights in the analytic
  hierarchy process.
\newblock {\em Computers \& Operations Research\/}~{\em 32\/}(9), 2235--2254.

\bibitem[\protect\citeauthoryear{Chao, Kou, Li, and Peng}{Chao
  et~al.}{2018}]{ChaoKouLiPeng2018}
Chao, X., G.~Kou, T.~Li, and Y.~Peng (2018).
\newblock Jie ke versus alphago: A ranking approach using decision making
  method for large-scale data with incomplete information.
\newblock {\em European Journal of Operational Research\/}~{\em 265\/}(1),
  239--247.

\bibitem[\protect\citeauthoryear{Chu, Kalaba, and Spingarn}{Chu
  et~al.}{1979}]{chu1979comparison}
Chu, A., R.~Kalaba, and K.~Spingarn (1979).
\newblock A comparison of two methods for determining the weights of belonging
  to fuzzy sets.
\newblock {\em Journal of Optimization theory and applications\/}~{\em
  27\/}(4), 531--538.

\bibitem[\protect\citeauthoryear{Cogger and Yu}{Cogger and
  Yu}{1985}]{cogger1985eigenweight}
Cogger, K. and P.~Yu (1985).
\newblock Eigenweight vectors and least-distance approximation for revealed
  preference in pairwise weight ratios.
\newblock {\em Journal of Optimization Theory and Applications\/}~{\em
  46\/}(4), 483--491.

\bibitem[\protect\citeauthoryear{Crawford}{Crawford}{1987}]{crawford1987geometric}
Crawford, G. (1987).
\newblock The geometric mean procedure for estimating the scale of a judgement
  matrix.
\newblock {\em Mathematical Modelling\/}~{\em 9\/}(3-5), 327--334.

\bibitem[\protect\citeauthoryear{Crawford and Williams}{Crawford and
  Williams}{1985}]{CrawfordWilliams1985}
Crawford, G. and C.~Williams (1985).
\newblock A note on the analysis of subjective judgment matrices.
\newblock {\em Journal of Mathematical Psychology\/}~{\em 29\/}(4), 387--405.

\bibitem[\protect\citeauthoryear{Csat\'o}{Csat\'o}{2013}]{Csato2013}
Csat\'o, L. (2013).
\newblock Ranking by pairwise comparisons for swiss-system tournaments.
\newblock {\em Central European Journal of Operations Research\/}~{\em
  21\/}(4), 783--803.

\bibitem[\protect\citeauthoryear{Csat{\'o} and R{\'o}nyai}{Csat{\'o} and
  R{\'o}nyai}{2016}]{csato2016incomplete}
Csat{\'o}, L. and L.~R{\'o}nyai (2016).
\newblock Incomplete pairwise comparison matrices and weighting methods.
\newblock {\em Fundamenta Informaticae\/}~{\em 144\/}(3-4), 309--320.


\bibitem[\protect\citeauthoryear{Davie}{Davie}{2013}]{Davie}
Davie, A.M. (2013).
\newblock Improved bound for complexity of matrix multiplication.
\newblock {\em Proceedings of the Royal Society of Edinburgh\/}~{\em 143A\/}(2), 351--370.


\bibitem[\protect\citeauthoryear{de~Graan}{de~Graan}{1980}]{deGraan1980}
de~Graan, J. (1980).
\newblock Extensions of the multiple criteria analysis method of
  \textsc{T.L.~S}aaty.
\newblock {\em Technical Report m.f.a. 80-3, National Institute for Water
  Supply, Leidschendam, The Netherlands. Presented at EURO IV, Cambridge, July
  22-25\/}.

\bibitem[\protect\citeauthoryear{de~Jong}{de~Jong}{1984}]{deJong1984}
de~Jong, P. (1984).
\newblock A statistical approach to \textsc{S}aaty's scaling methods for
  priorities.
\newblock {\em Journal of Mathematical Psychology\/}~{\em 28\/}(4), 467--478.

\bibitem[\protect\citeauthoryear{Dyer}{Dyer}{1990}]{dyer1990remarks}
Dyer, J.~S. (1990).
\newblock Remarks on the analytic hierarchy process.
\newblock {\em Management science\/}~{\em 36\/}(3), 249--258.

\bibitem[\protect\citeauthoryear{Fedrizzi and Giove}{Fedrizzi and
  Giove}{2007}]{FedrizziGiove2007}
Fedrizzi, M. and S.~Giove (2007).
\newblock Incomplete pairwise comparison and consistency optimization.
\newblock {\em European Journal of Operational Research\/}~{\em 183\/}(1),
  303--313.
  
  

\bibitem[\protect\citeauthoryear{Gass and Rapcs{\'a}k}{Gass and
  Rapcs{\'a}k}{2004}]{gass2004singular}
Gass, S.~I. and T.~Rapcs{\'a}k (2004).
\newblock Singular value decomposition in ahp.
\newblock {\em European Journal of Operational Research\/}~{\em 154\/}(3),
  573--584.

\color{black}\bibitem[\protect\citeauthoryear{Godsil}{Godsil}{2001}]{godsil2001g}
Godsil, C. and Royle, G. (2001).
\newblock Algebraic graph theory.
\newblock {\em Graduate text in mathematics, Springer, New York\/}.
\color{black}

\bibitem[\protect\citeauthoryear{Golany and Kress}{Golany and
  Kress}{1993}]{GolanyKress1993}
Golany, B. and M.~Kress (1993).
\newblock A multicriteria evaluation of methods for obtaining weights from
  ratio-scale matrices.
\newblock {\em European Journal of Operational Research\/}~{\em 69\/}(2),
  210--220.

\bibitem[\protect\citeauthoryear{Golden, Wasil, and Harker~(eds.)}{Golden
  et~al.}{1989}]{GoldenWasilHarker1989}
Golden, B., E.~Wasil, and P.~Harker~(eds.) (1989).
\newblock {\em The Analytic Hierarchy Process, Applications and Studies}.
\newblock Springer.

\color{black}
\bibitem[\protect\citeauthoryear{Harker}{Harker}{1987}]{Harker1987}
Harker, P. T. (1987).
\newblock Alternative modes of questioning in the analytic hierarchy process 
\newblock {\em Mathematical Modelling\/}~vol. 9.3-5, pp. 353-360.
\color{black}

\bibitem[\protect\citeauthoryear{Ho}{Ho}{2008}]{Ho2008}
Ho, W. (2008).
\newblock Integrated analytic hierarchy process and its applications --
  \textsc{A} literature review.
\newblock {\em European Journal of Operational Research\/}~{\em 186\/}(1),
  211--228.

\bibitem[\protect\citeauthoryear{Huang}{Huang}{2002}]{huang2002enhancement}
Huang, Y.-F. (2002).
\newblock Enhancement on sensitivity analysis of priority in analytic hierarchy
  process.
\newblock {\em International Journal of General Systems\/}~{\em 31\/}(5),
  531--542.
  
  
\bibitem[\protect\citeauthoryear{Johnson}{Johnson}{1975}]{Johnson1975}
Johnson, D. B. (1975).
\newblock Finding all the elementary circuits of a directed graph.
\newblock {\em SIAM Journal on Computing \/}~{\em 4\/}(1),
77-84.

\color{black}
\bibitem[\protect\citeauthoryear{Ku{\l}akowski}{Ku{\l}akowski}{2019}]{Kulakowski}
Ku{\l}akowski, K. (2019).
\newblock On the geometric mean method for incomplete pairwise comparisons.
\newblock {\em arXiv preprint\/}~arXiv:1905.04609.
\color{black}

\bibitem[\protect\citeauthoryear{Kwiesielewicz}{Kwiesielewicz}{1996}]{Kwiesielewicz1996}
Kwiesielewicz, M. (1996).
\newblock The logarithmic least squares and the generalised pseudoinverse in
  estimating ratios.
\newblock {\em European Journal of Operational Research\/}~{\em 93\/}(3),
  611--619.
 

\bibitem[\protect\citeauthoryear{Lee}{Lee}{2004}]{Lee:2004}S. Lee, ``Fast, centralized detection and resolution of distributed deadlocks in the generalized model", {\em  IEEE Transactions on Software Engineering},  vol. 30, no. 9, pp. 561--573, 2004.

\bibitem[\protect\citeauthoryear{Lipovetsky and Conklin}{Lipovetsky and
  Conklin}{2002}]{lipovetsky2002robust}
Lipovetsky, S. and W.~M. Conklin (2002).
\newblock Robust estimation of priorities in the ahp.
\newblock {\em European Journal of Operational Research\/}~{\em 137\/}(1),
  110--122.

\bibitem[\protect\citeauthoryear{Menci, Oliva, Papi, Setola, and Scala}{Menci
  et~al.}{2018}]{ECC2018}
Menci, M., G.~Oliva, M.~Papi, R.~Setola, and A.~Scala (2018).
\newblock A suite of distributed methodologies to solve the sparse analytic
  hierarchy process problem.
\newblock {\em Proceedings of the 2018 European Control Conference\/},
  1147--1453.

\bibitem[\protect\citeauthoryear{Mikhailov}{Mikhailov}{2000}]{mikhailov2000fuzzy}
Mikhailov, L. (2000).
\newblock A fuzzy programming method for deriving priorities in the analytic
  hierarchy process.
\newblock {\em Journal of the Operational Research Society\/}~{\em 51\/}(3),
  341--349.

\bibitem[\protect\citeauthoryear{Olfati-Saber, Fax, and Murray}{Olfati-Saber
  et~al.}{2007}]{olfati2007consensus}
Olfati-Saber, R., J.~A. Fax, and R.~M. Murray (2007).
\newblock Consensus and cooperation in networked multi-agent systems.
\newblock {\em Proceedings of the IEEE\/}~{\em 95\/}(1), 215--233.

\bibitem[\protect\citeauthoryear{Oliva et al.}{Oliva et al.}{2018}]{LOOP:20015}G. Oliva, R. Setola, L. Glielmo and C. Hadjicostis, ``Distributed Cycle Detection and Removal", IEEE Transactions on Control of Network Systems, vol. 5, no. 1, pp. 194-204, 2018.

\bibitem[\protect\citeauthoryear{Oliva, Setola, and Scala}{Oliva
  et~al.}{2017}]{oliva2017sparse}
Oliva, G., R.~Setola, and A.~Scala (2017).
\newblock Sparse and distributed analytic hierarchy process.
\newblock {\em Automatica\/}~{\em 85}, 211--220.

\bibitem[\protect\citeauthoryear{Saaty and Vargas}{Saaty and
  Vargas}{2012}]{SaatyVargas2012}
Saaty, T. and L.~Vargas (2012).
\newblock {\em Models, Methods, Concepts \& Applications of the Analytic
  Hierarchy Process}.
\newblock Springer.

\bibitem[\protect\citeauthoryear{Saaty}{Saaty}{1977}]{saaty1977scaling}
Saaty, T.~L. (1977).
\newblock A scaling method for priorities in hierarchical structures.
\newblock {\em Journal of mathematical psychology\/}~{\em 15\/}(3), 234--281.

\bibitem[\protect\citeauthoryear{Saaty}{Saaty}{1988}]{saaty1988analytic}
Saaty, T.~L. (1988).
\newblock What is the analytic hierarchy process?
\newblock In {\em Mathematical models for decision support}, pp.\  109--121.
  Springer.

\bibitem[\protect\citeauthoryear{Saaty}{Saaty}{1990}]{saaty1990exposition}
Saaty, T.~L. (1990).
\newblock An exposition of the ahp in reply to the paper ``remarks on the
  analytic hierarchy process''.
\newblock {\em Management science\/}~{\em 36\/}(3), 259--268.

\bibitem[\protect\citeauthoryear{Srdjevic}{Srdjevic}{2005}]{srdjevic2005combining}
Srdjevic, B. (2005).
\newblock Combining different prioritization methods in the analytic hierarchy
  process synthesis.
\newblock {\em Computers \& Operations Research\/}~{\em 32\/}(7), 1897--1919.

\bibitem[\protect\citeauthoryear{Subramanian and Ramanathan}{Subramanian and
  Ramanathan}{2012}]{SubramanianRamanathan2012}
Subramanian, N. and R.~Ramanathan (2012).
\newblock A review of applications of analytic hierarchy process in operations
  management.
\newblock {\em International Journal of Production Economics\/}~{\em 138\/}(2),
  215--241.

\bibitem[\protect\citeauthoryear{Takeda, Cogger, and Yu}{Takeda
  et~al.}{1987}]{takeda1987estimating}
Takeda, E., K.~Cogger, and P.~Yu (1987).
\newblock Estimating criterion weights using eigenvectors: A comparative study.
\newblock {\em European Journal of Operational Research\/}~{\em 29\/}(3),
  360--369.

\bibitem[\protect\citeauthoryear{Takeda and Yu}{Takeda and
  Yu}{1995}]{TakedaYu1995}
Takeda, E. and P.~Yu (1995).
\newblock Assessing priority weights from subsets of pairwise comparisons in
  multiple criteria optimization problems.
\newblock {\em European Journal of Operational Research\/}~{\em 86\/}(2),
  315--331.

\bibitem[\protect\citeauthoryear{Vaidya and Kumar}{Vaidya and
  Kumar}{2006}]{VaidyaKumar2006}
Vaidya, O. and S.~Kumar (2006).
\newblock Analytic hierarchy process: An overview of applications.
\newblock {\em European Journal of Operational Research\/}~{\em 169\/}(1),
  1--29.

\bibitem[\protect\citeauthoryear{Wang, Parkan, and Luo}{Wang
  et~al.}{2007}]{wang2007priority}
Wang, Y.-M., C.~Parkan, and Y.~Luo (2007).
\newblock Priority estimation in the ahp through maximization of correlation
  coefficient.
\newblock {\em Applied Mathematical Modelling\/}~{\em 31\/}(12), 2711--2718.

\bibitem[\protect\citeauthoryear{Wei}{Wei}{1952}]{wei1952algebraic}
Wei, T.-H. (1952).
\newblock {\em Algebraic foundations of ranking theory.}
\newblock Ph.\ D. thesis, University of Cambridge.

\color{black}
\bibitem[\protect\citeauthoryear{Zangwill}{Zangwill}{1969}]{Zangwill}
Zangwill, W.~I. (1969).
\newblock {\em Nonlinear programming: a unified approach}, Volume 196.
\newblock Prentice-Hall Englewood Cliffs, NJ.
\color{black}
\end{thebibliography}

\section*{Biographies}

\noindent\textbf{Luca Faramondi } received the Laurea degree in Computer Science and Automation (2013) and the PhD degree on Computer Science and Automation (2017) from the University Roma Tre of Rome. He is currently PostDoc Fellow at Complex Systems \& Security Laboratory at the University Campus Bio- Medico of Rome. He is involved in several national and European projects about the Critical Infrastructure and Indoor Localization. His research interests include the identification of network vulnerabilities, cyber physical systems, and optimization at large.
\vspace{2mm}

\noindent\textbf{Gabriele Oliva} received the Laurea degree and the Ph.D in Computer Science and Automation Engineering in 2008 and 2012, respectively, both at University Roma Tre of Rome, Italy. He is currently assistant professor in Automatic Control at the University Campus Bio-Medico of Rome, Italy. His main research interests include distributed systems, distributed optimization, and applications of graph theory in technological and biological systems.
\vspace{2mm}

\noindent\textbf{S\'andor Boz\'oki} obtained his MSc degree in applied mathematics from E\"otv\"os Lorand
University, and PhD degree in economics from Corvinus University of Budapest, Hungary. He
is a senior research fellow at the Research Group of Operations Research and Decision
Systems, Laboratory on Engineering and Management Intelligence, Institute for Computer
Science and Control (SZTAKI). He is an associate professor at the Department of Operations
Research and Actuarial Sciences, Corvinus University of Budapest. His research interests
include multi-attribute decision making, pairwise comparison matrices, preference modelling,
global optimization and multivariate polynomial systems.
\end{document}